\newtheorem{theorem}{Theorem}[section]
\newtheorem{lemma}[theorem]{Lemma}
\theoremstyle{definition}
\newtheorem{definition}[theorem]{Definition}
\newtheorem{proposition}[theorem]{Proposition}
\newtheorem{corollary}[theorem]{Corollary}
\theoremstyle{remark}
\numberwithin{equation}{section}
\begin{document}

\title{ \rm A Geometric Approach to Shortest Bounded Curvature Paths}

\author{Jos\'{e} Ayala}
\address{FIA, Universidad Arturo Prat, Iquique, Chile}
\email{jayalhoff@gmail.com}

\author{David Kirszenblat}
\address{Department of Mathematics and Statistics, University of Melbourne
              Parkville, VIC 3010 Australia}
\email{d.kirszenblat@student.unimelb.edu.au}

\author{Hyam Rubinstein}
\address{Department of Mathematics and Statistics, University of Melbourne
              Parkville, VIC 3010 Australia}
\email{rubin@ms.unimelb.edu.au}

\subjclass[2000]{Primary 49Q10; Secondary 90C47, 51E99, 68R99}

\keywords{Bounded curvature paths, Dubins paths, path optimisation}
\maketitle

\begin{abstract} We present a geometric proof for the classification of length minimisers in spaces of planar bounded curvature paths. Our methods can be adapted without much effort to classify length minimisers in spaces of bounded curvature paths in other surfaces. The main result in this note fills a gap by furnishing a geometric proof for a problem in geometry that hither to was missing from the literature.
\end{abstract}

\section{Motivation}
In this work we present a constructive proof for the classification of global length minimisers in spaces of planar bounded curvature paths first obtained by L. Dubins in \cite{dubins 1}. Our approach is flexible and easy to generalise. In particular, the techniques developed in this note allowed us to achieve:

\begin{itemize}  
\item the classification of the homotopy classes of bounded curvature paths \cite{paperd};
\item the classification of length minimisers in homotopy classes of bounded curvature paths \cite{paperb}.
\end{itemize}

In general terms, a planar bounded curvature path corresponds to a $C^1$ and piecewise $C^2$ path lying in $\mathbb R^2$. These paths connect two elements in the tangent bundle $T \mathbb R^2$ and have curvature bounded by a positive constant $\kappa$.  As proved by L. Dubins in \cite{dubins 1} a $C^1$  path of minimal length lying in ${\mathbb R}^2$, having its curvature bounded by a positive constant, connecting two elements of the tangent bundle $T{\mathbb R}^2$ is indeed a piecewise $C^2$ path (this is the well known {\sc csc}, {\sc ccc} characterisation \cite{dubins 1}). The view put forward by L. Dubins regarding the bound on the curvature (an interpretation followed by many) comes from considering a particle that moves at constant speed subject to a maximum possible force. Another physical interpretation can be the following: a uniform bending beam has an internal structure which depends on certain binding forces; when exposed to tension the beam may be deformed - such deformations always have a limit given by the beam's material resistance. Length minimising bounded curvature paths, widely known as Dubins paths, have proven to be extraordinarily useful in applications since a bound on the curvature is a turning circle constraint for the trajectory of a vehicle along a path. In contrast, our approach deals with curves as topological objects. 

To characterise the length minimisers we proceed as follows. Start with an arbitrary bounded curvature path and consider a partition of it so that the length of each piece is less than $\frac{1}{\kappa}$. Replace each piece of the path, called a {\it fragment}, by a piecewise constant curvature path, called a {\it replacement}, making sure the length of each replacement is at most the length of the respective fragment to be replaced. As a consequence of the previously described process, we obtain a bounded curvature path corresponding to a finite number of concatenated piecewise constant curvature paths (a $cs$ path) called a {\it normalisation}. Notice that there is an implicit sense of continuity when replacing a bounded curvature path by a normalisation since fragments can be chosen to be arbitrarily small. However, a continuity argument preserving the bound on curvature is only achieved in \cite{paperd}. After obtaining a normalisation for a given path, we develop a recursive process to decrease the path {\it complexity} (number of constant curvature components in a $cs$ path), and the path length, to then conclude that the length minimisers have complexity at most 3. There is also a homotopy involved in this process but no claim about curvature is needed.

In 1887 A. Markov in \cite{markov} initiated this theory by studying the optimal linking of railroad tracks. Seventy years later L. Dubins in \cite{dubins 1, dubins 2} obtained the first general results in this theory. In between the work of L. Dubins and A. Markov there are a number of scattered results due to V. Ionin, G. Pestov, E. Schmidt, A. Schur and H. Schwarz \cite{blas, pestov, schm}.  There are a number of proofs for the characterisation of length minimisers, each of them with a different flavour. In 1974 J. Johnson in \cite{johnson} characterised length minimisers by applying the Pontryagin maximum principle. Later in 1990 Reeds and Shepp studied the length minimisers for the trajectories of a car moving forward and backward. Recently S. Eriksson-Bique et. al. in \cite{kirk} introduced a discrete analogue to Dubins paths. In \cite{mon} F. Monroy-P\'{e}rez also applied the  Pontryagin maximum principle to obtain the length minimisers in 2-dimensional homogeneous spaces of constant curvature. Our proof is elementary, easy to implement and provides a foundation for a continuity argument for the deformation of bounded curvature paths \cite{paperd}. This step permits us to push the theory forward outside optimality. In addition, our techniques establish an elementary framework to study bounded curvature paths in other surfaces. 

Bounded curvature paths have proven to be useful in science and engineering as well as in mathematics. These paths have been extensively studied in computer science \cite{aga, bui, kirk, ny, rus}, control theory \cite{jur, mit, mon, mur, sig1, sig2, sus2}, engineering \cite{brazil 1, chang, irina1, san} and recently in geometric knot theory and the Steiner tree problem \cite{sul1, dur1,elk, dave1}.  It would be interesting to study the computational complexity of reducing a $cs$ path to the length minimiser given a set of operations on $cs$ paths reducing the length and complexity. Currently the first author is implementing the techniques presented in this work in Dubins Explorer, a software for bounded curvature paths \cite{dubinsexplorer}. In general, a bound on curvature is a property widely observed in nature: in potamology, in the formation of meanders \cite{meanders1} and in the geometry of coral structures \cite{coral}.

\section{Normalising Bounded Curvature Paths}

We start this work by presenting basic definitions. Then, we procede to introduce the concept of normalisation for bounded curvature paths. The idea is to (after some point) only deal with bounded curvature paths corresponding to a finite number of concatenations of line segments and arcs of unit circles. Let us denote by $T{\mathbb R}^2$ the tangent bundle of ${\mathbb R}^2$. The elements in $T{\mathbb R}^2$ correspond to pairs $(x,X)$ sometimes denoted just by {\sc x}. As usual, the first coordinate corresponds to a point in ${\mathbb R}^2$ and the second to a tangent vector in ${\mathbb R}^2$ at $x$.

\begin{definition} \label{adm_pat} Given $(x,X),(y,Y) \in T{\mathbb R}^2$, we say that a path $\gamma: [0,s]\rightarrow {\mathbb R}^2$ connecting these points is a {\it bounded curvature path} if:
\end{definition}
 \begin{itemize}
\item $\gamma$ is $C^1$ and piecewise $C^2$.
\item $\gamma$ is parametrized by arc length (i.e. $||\gamma'(t)||=1$ for all $t\in [0,s]$).
\item $\gamma(0)=x$,  $\gamma'(0)=X$;  $\gamma(s)=y$,  $\gamma'(s)=Y.$
\item $||\gamma''(t)||\leq \kappa$, for all $t\in [0,s]$ when defined, $\kappa>0$ a constant.
\end{itemize}
Of course, $s$ is the arc-length of $\gamma$.

The first condition means that a bounded curvature path has continuous first derivative and piecewise continuous second derivative. For the third condition to make sense, without loss of generality, we extend the domain of $\gamma$ to $(-\epsilon,s+\epsilon)$ for $\epsilon>0$. The third item is called the endpoint condition. The bound on the curvature $\kappa>0$ can be chosen to be $\kappa=1$ by considering a suitable scaling of the plane. Generally, the interval $[0,s]$ is denoted by $I$. Denote by ${\mathcal L}(\gamma)$ the length of $\gamma$ and ${\mathcal L}(\gamma,a,b)$ the length of $\gamma$ restricted to $[a,b]\subset I$.

\begin{definition} \label{admsp} Given $\mbox{\sc x,y}\in T{\mathbb R}^2$. The space of bounded curvature paths starting at $x$ tangent to $X$ finishing at $y$ tangent to $Y$ is denoted by $\Gamma(\mbox{\sc x,y})$. \end{definition}

Important properties of $\Gamma(\mbox{\sc x,y})$ (e.g., the kind of length minimiser, the number of length minimisers, the number of connected components) depend on the endpoint condition. Also, the existence of a homotopy class in $\Gamma(\mbox{\sc x,y})$ whose elements are embedded, depends on the chosen endpoint \cite{paperc, paperd}. 

Consider the origin of an orthogonal coordinate system in $\mathbb R^2$ as the base point $x$ with $X$ lying in the abscissa. 
Note that the arc-length parametrisation implies the endpoints are elements in the unit tangent bundle $UT{\mathbb R}^2$. This space is equipped with a natural projection $\pi : UT{\mathbb R}^2 \rightarrow {\mathbb R}^2$. The fiber $\pi^{-1}(x)$ corresponds to ${\mathbb S}^1$ for all $x \in{\mathbb R}^2$. The space of endpoint conditions corresponds to a sphere bundle on ${\mathbb R}^2$ with fiber ${\mathbb S}^1$.

 Observe that in a bounded curvature path the osculating circle at each point (when defined) has radius at least $1$. In Figure \ref{BCPex2} we illustrate some bounded curvature paths and osculating circles.

\begin{definition} Let $\mbox{\sc C}_ l(\mbox{\sc x})$ be the unit circle tangent to $x$ and to the left of $X$. An analogous interpretation applies for $\mbox{\sc C}_ r(\mbox{\sc x})$, $\mbox{\sc C}_ l(\mbox{\sc y})$ and $\mbox{\sc C}_ r(\mbox{\sc y})$ (see Figure \ref{BCPex2}). These circles are called {\it adjacent circles}. We denote their centers with lower-case letters, so the center of $\mbox{\sc C}_ l(\mbox{\sc x})$ is denoted by $c_l(\mbox{\sc x})$.
  \end{definition}

{ \begin{figure} [[htbp]
 \begin{center}
\includegraphics[width=1\textwidth,angle=0]{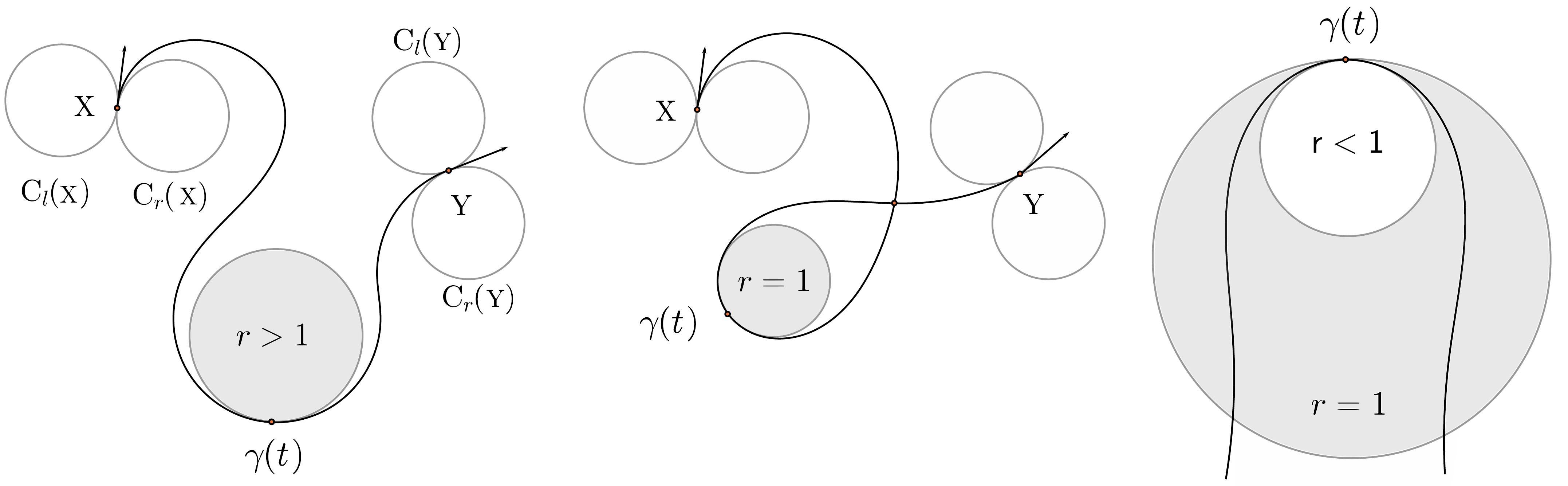}
\end{center}
\caption{Left and centre: Examples of bounded curvature paths and adjacent circles. Right: An example when the curvature bound is violated. Note that the radius of the osculating circle $r<1$ at   $\gamma(t)$ implies $\kappa>1$. }
 \label{BCPex2}
\end{figure}}

\begin{definition} A $cs$ path is a bounded curvature path corresponding to a finite number of concatenations of line segments (denoted by {\sc s}) and arcs of a unit circle (denoted by {\sc c}) see Figure \ref{BCPex2}. Let {\sc r} denote a clockwise traversed arc and {\sc l} a counterclockwise traversed arc. The line segments and arcs of circles are called  {\it components}. The number of components is called the {\it complexity} of the path.
\end{definition}

{ \begin{figure} [[htbp]
 \begin{center}
\includegraphics[width=.8\textwidth,angle=0]{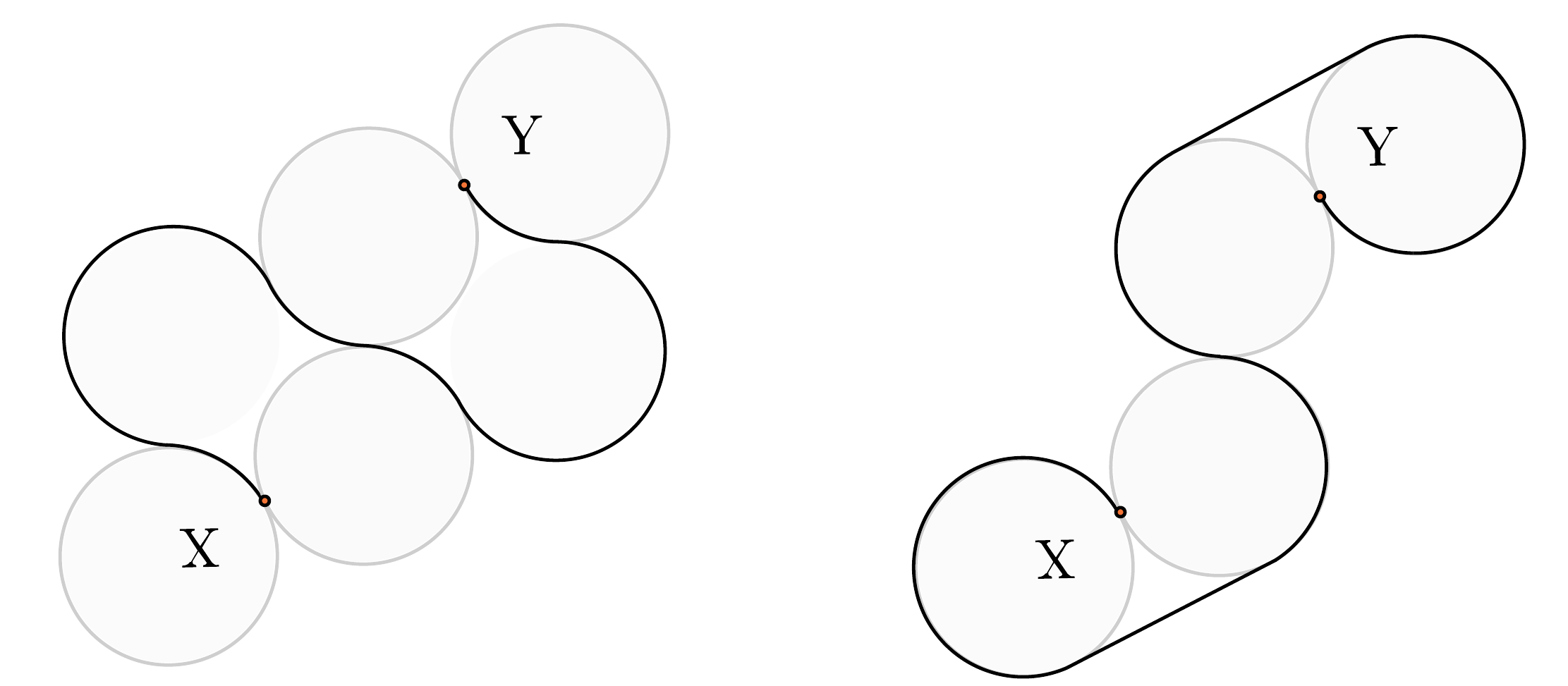}
\end{center}
\caption{Examples of $cs$ paths in $\Gamma(\mbox{\sc x,y})$.}
 \label{figcomplexity}
\end{figure}}

\begin{definition}\label{frag} A {\it fragmentation} of a bounded curvature path $\gamma:I \rightarrow {\mathbb R}^2$ corresponds to a finite sequence $0=t_0<t_1\ldots <t_m=s$ of elements in $I$ such that,
${\mathcal L}(\gamma,t_{i-1},t_i)<  1 $
with
$\sum_{i=1}^m {\mathcal L}(\gamma,t_{i-1},t_i) =s$.
We denote by a {\it fragment}, the restriction of $\gamma$ to the interval determined by two consecutive elements in the fragmentation.
\end{definition}

\begin{definition}\label{rzlip} Let ${\mbox{\sc z}}\in T{\mathbb R}^2$ with $\mbox{\sc z}=(z,Z)$, where the first component corresponds to the origin of a coordinate system with abscissa $x$ and ordinate $y$, and $Z=(1,0)$. Denote by ${\mathcal R}(\mbox{\sc z})$ the region enclosed by the complement of the union of the interior of the disks with boundary $\mbox{\sc C}_l(\mbox{\sc z})$ and $\mbox{\sc C}_r(\mbox{\sc z})$ intersected with a unit radius disk centered at $z$ denoted by ${D}_z$. Let ${\mathcal R}^+(\mbox{\sc z})$ be the set of points in ${\mathcal R}(\mbox{\sc z})$ with $x>0$ and ${\mathcal R}^-(\mbox{\sc z})$ be the set of points in ${\mathcal R}(\mbox{\sc z})$ with $x<0$ (see Figure \ref{figleavreg}).
\end{definition}

The next proposition uses a technical result. We propose that the reader refer to Corollary 2.4 in \cite{papere} for details. The omission of such a result should not compromise the reader's understanding. Corollary 2.4 in \cite{papere} states that a bounded curvature path making a u-turn must have length at least $2$ (see dashed trace in Figure \ref{figleavreg}). 

\begin{proposition} \label{r1r3p} Given $\mbox{\sc z}\in T{\mathbb R}^2$. A fragment $\gamma: I\to {\mathcal R}(\mbox{\sc z})$ not entirely lying in $\partial {\mathcal R}(\mbox{\sc z})$ with $\gamma(t)=z$ and $\gamma'(t)=Z$ does not intersect $\partial{\mathcal R}^-(\mbox{\sc z})$ or $\partial{\mathcal R}^+(\mbox{\sc z})$ (see Figure \ref{figleavreg} right).
\end{proposition}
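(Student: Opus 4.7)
My plan is to extract as much as possible from the length bound ${\mathcal L}(\gamma)<1$ combined with the curvature bound $\kappa\le 1$, and to treat the different components of $\partial \mathcal R^\pm(\mbox{\sc z})$ separately: the outer arc lying on $\partial D_z$ via a simple distance argument, and the arcs of the adjacent circles $\mbox{\sc C}_l(\mbox{\sc z})$ and $\mbox{\sc C}_r(\mbox{\sc z})$ via a curvature-comparison rigidity argument using the fact that they are themselves extremal unit-curvature circles tangent to $Z$ at $z$.

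First I would use the curvature bound to pin down the direction of $\gamma$. Because $\gamma$ has unit speed with $\|\gamma''\|\le 1$ where defined and $\gamma'(t)=Z=(1,0)$, the tangent angle $\theta(u)$ satisfies $|\theta(u)|\le |u-t|\le {\mathcal L}(\gamma)<1<\pi/2$. Thus $\cos\theta(u)>0$ throughout $I$, so the first coordinate of $\gamma$ is strictly monotone, and $\gamma$ crosses the $y$-axis only at $z$; the portion $\gamma|_{[0,t]}$ lies in $\overline{\mathcal R^-(\mbox{\sc z})}$ and $\gamma|_{[t,s]}$ in $\overline{\mathcal R^+(\mbox{\sc z})}$. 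Moreover, for any $u\in I$ the Euclidean distance satisfies $|\gamma(u)-z|\le |u-t|<1$, so $\gamma$ stays strictly inside $D_z$; in particular it cannot meet the arcs of $\partial D_z$ contained in $\partial \mathcal R^\pm(\mbox{\sc z})$. Corollary~2.4 of \cite{papere} stands behind the monotonicity estimate: a genuine $u$-turn would require length at least $2$, comfortably above ${\mathcal L}(\gamma)<1$.

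The main obstacle is excluding tangencies of $\gamma$ with $\mbox{\sc C}_l(\mbox{\sc z})$ or $\mbox{\sc C}_r(\mbox{\sc z})$ at points other than $z$. By symmetry it is enough to suppose $p=\gamma(u^*)\in \mbox{\sc C}_l(\mbox{\sc z})\setminus \{z\}$ with $u^*>t$. Since $\gamma\subset {\mathcal R}(\mbox{\sc z})$ lies outside the open disk bounded by $\mbox{\sc C}_l(\mbox{\sc z})$ and is $C^1$, the forward tangent $\gamma'(u^*)$ must coincide with the forward tangent to $\mbox{\sc C}_l(\mbox{\sc z})$ at $p$. Parametrising $\mbox{\sc C}_l(\mbox{\sc z})$ by arc length from $z$ (so the tangent angle at parameter $\phi$ is exactly $\phi$), let $\phi_p$ be the parameter of $p$ and $\ell=u^*-t$ the arc length of $\gamma|_{[t,u^*]}$. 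From the curvature bound, $\phi_p\le \ell$. On the other hand, for $\sigma\in [t,u^*]$ one has $|\theta(\sigma)|\le \sigma-t<\pi/2$, so $\cos\theta(\sigma)\ge \cos(\sigma-t)$ and hence
\[
\sin\phi_p \;=\; \int_t^{u^*}\!\cos\theta(\sigma)\,d\sigma \;\ge\; \int_t^{u^*}\!\cos(\sigma-t)\,d\sigma \;=\; \sin\ell,
\]
giving $\phi_p\ge \ell$. The two inequalities together force $\phi_p=\ell$ and $\theta(\sigma)=\sigma-t$ throughout $[t,u^*]$, so $\gamma|_{[t,u^*]}$ coincides with the arc of $\mbox{\sc C}_l(\mbox{\sc z})$ from $z$ to $p$. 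Iterating this rigidity statement past $u^*$ and backward along $\gamma|_{[0,t]}$ (using Corollary~2.4 to rule out excursions off the circle that would require extra length) propagates the coincidence until $\gamma$ lies entirely in $\mbox{\sc C}_l(\mbox{\sc z})\subset \partial {\mathcal R}(\mbox{\sc z})$, contradicting the hypothesis that $\gamma$ is not entirely in $\partial {\mathcal R}(\mbox{\sc z})$.

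The step I expect to require the most care is precisely this curvature-comparison rigidity, and in particular promoting its local conclusion to a global one: showing that any single unwanted tangency with an adjacent circle propagates to force the entire fragment onto that circle. The forward distance estimate and the monotonicity in $x$ are essentially consequences of the two basic bounds, but the rigidity step is where the geometry of the adjacent circles being themselves extremal enters essentially.
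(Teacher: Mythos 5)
Your approach is genuinely different from the paper's. The paper's proof is very short: it reduces immediately to Corollary~2.4 of \cite{papere} (a bounded curvature path executing a $u$-turn has length at least $2$) and derives a length contradiction against $\mathcal L(\gamma)<1$. You instead extract the monotonicity of the first coordinate and the confinement to $D_z$ directly from $\|\gamma''\|\le 1$ and $\mathcal L(\gamma)<1$, and then prove a curvature-comparison rigidity statement via the sine-integral inequality: a tangency of $\gamma$ with an adjacent circle at $p=\gamma(u^*)$ forces $\phi_p=\ell$ and hence forces $\gamma|_{[t,u^*]}$ to coincide with the circular arc from $z$ to $p$. This part is correct and is an attractive, more self-contained replacement for the black-box appeal to Corollary~2.4.

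The gap is in the final iteration step, and it is a real one. The rigidity you prove on $[t,u^*]$ does \emph{not} propagate past $u^*$: nothing prevents the fragment from departing from $\mbox{\sc C}_l(\mbox{\sc z})$ at $p$ along the tangent line (or along any arc of curvature strictly less than $1$ curving away from the circle) while remaining inside $\mathcal R(\mbox{\sc z})$ and keeping total length below $1$. Such a fragment meets $\partial\mathcal R^+(\mbox{\sc z})$ along a nondegenerate sub-arc yet is not entirely contained in $\partial\mathcal R(\mbox{\sc z})$, so your claimed collapse of $\gamma$ onto $\mbox{\sc C}_l(\mbox{\sc z})$ is false, and Corollary~2.4 cannot be used to rule out this excursion since it involves no $u$-turn (the tangent at $p$ points nearly parallel to $Z$, not back toward it). What your rigidity argument actually establishes is the weaker statement that a fragment cannot \emph{cross} an adjacent circle --- any contact with $\mbox{\sc C}_l(\mbox{\sc z})$ or $\mbox{\sc C}_r(\mbox{\sc z})$ forces coincidence with the arc up to that point, so $\gamma$ cannot exit $\mathcal R(\mbox{\sc z})$ through those circles. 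That is the content actually used in Proposition~\ref{r1r3pr}, and it is what you should claim; the last two sentences of your proof, as written, assert more than the argument gives.
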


\begin{proof} Choose $\mbox{\sc z}\in T{\mathbb R}^2$ and suppose that a fragment $\gamma$ is contained in ${\mathcal R}(\mbox{\sc z})$ with $\gamma(t)=z$ and $\gamma'(t)=Z$. Without loss of generality suppose $\gamma$ intersects $\partial^+{\mathcal R}(\mbox{\sc z})$ in $\mbox{\sc C}_r(\mbox{\sc z})$ at $\gamma(t')$ for some $t'\in I$. By considering $z=p$ and $\gamma(t')=q$ as in Corollary 2.4 in \cite{papere} we obtain that the length of $\gamma$ is at least 2. Since the diameter of $\partial{\mathcal R}^+(\mbox{\sc z})$ is equal to 1 we conclude the proof.  The other cases are proved by applying a similar argument.
\end{proof}

{ \begin{figure} [[htbp]
 \begin{center}
\includegraphics[width=.8\textwidth,angle=0]{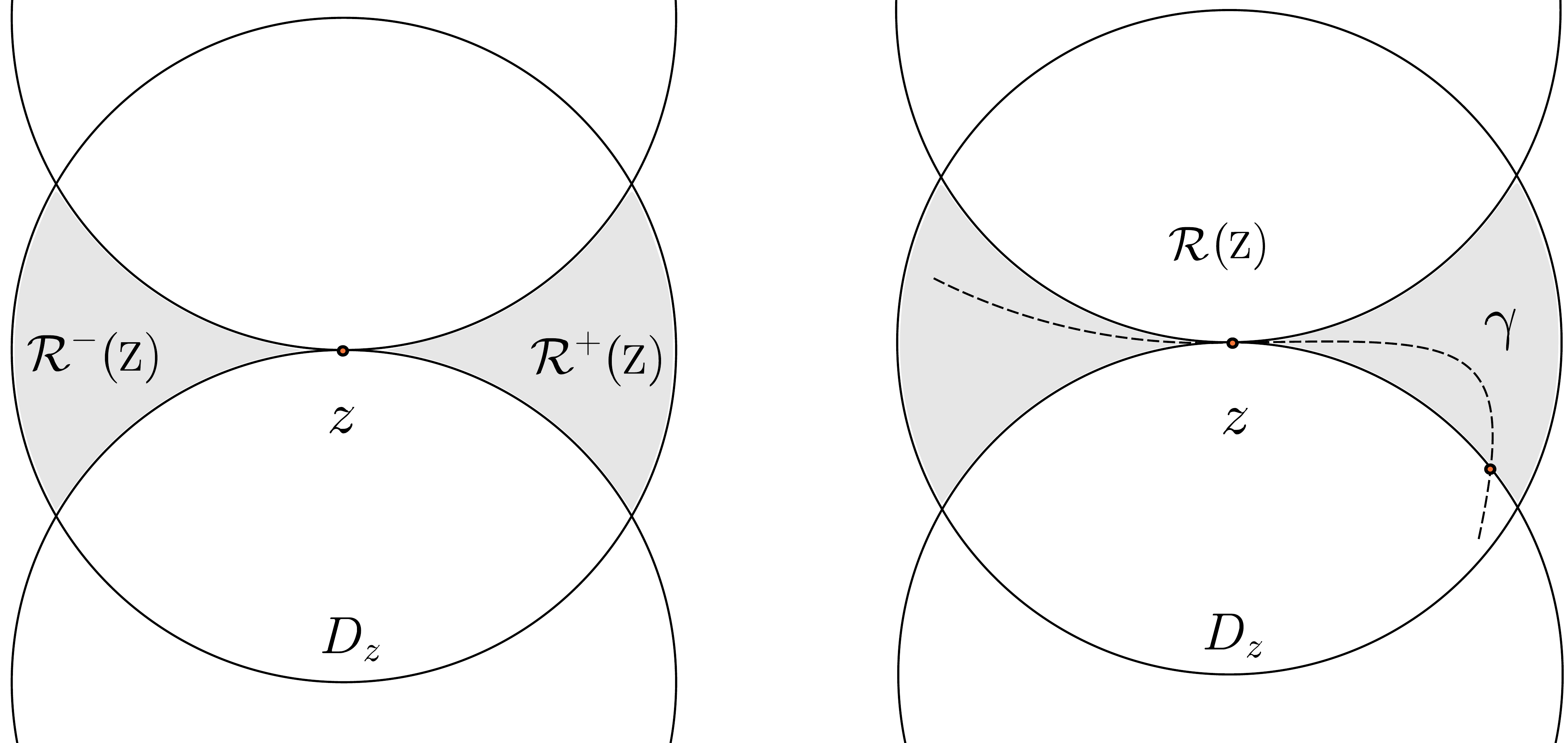}
\end{center}
\caption{Left: The grey region corresponds to ${\mathcal R}(\mbox{\sc z})$. Right: The dashed trace is not a fragment since by Proposition \ref{r1r3p} a fragment never leaves ${\mathcal R}(\mbox{\sc z})$ through $\mbox{\sc C}_l(\mbox{\sc z})$ or $\mbox{\sc C}_r(\mbox{\sc z})$. }
\label{figleavreg}
\end{figure}}

We include the proofs for the following simple but crucial lemmata. These give lower bounds for the lengths of curves when compared with arcs of unit circles and line segments. Consider a curve $\gamma (t)=(r(t)\cos \theta(t), r(t)  \sin \theta(t))$ in polar coordinates.

\begin{lemma}\label{rad} \rm For any curve $\gamma :[0,s] \rightarrow {\mathbb R}^2$ with $\gamma(0)= (1,0)$, $r(t)\geq 1$, and $\theta(s)=\eta$, one has ${\mathcal L}(\gamma)\geq \eta$ (see Figure \ref{figlemmarc} left).
\end{lemma}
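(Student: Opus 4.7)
The plan is a direct polar-coordinate computation. Writing $\gamma(t) = (r(t)\cos\theta(t), r(t)\sin\theta(t))$, the arc length element in polar coordinates is
\begin{equation*}
\mathcal{L}(\gamma) \;=\; \int_0^s \sqrt{\dot r(t)^2 + r(t)^2\,\dot\theta(t)^2}\,dt.
\end{equation*}
The idea is to discard the radial contribution, then use the hypothesis $r(t)\geq 1$ to replace the weight $r(t)$ by $1$, and finally collapse the total angular variation down to its net displacement.

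First I would drop the term $\dot r(t)^2 \geq 0$ under the square root to obtain
\begin{equation*}
\mathcal{L}(\gamma) \;\geq\; \int_0^s r(t)\,|\dot\theta(t)|\,dt.
\end{equation*}
Since $\gamma(0)=(1,0)$ we may take $\theta(0)=0$, and the hypothesis $r(t)\geq 1$ gives $r(t)|\dot\theta(t)| \geq |\dot\theta(t)|$, whence
\begin{equation*}
\mathcal{L}(\gamma) \;\geq\; \int_0^s |\dot\theta(t)|\,dt \;\geq\; \Bigl|\int_0^s \dot\theta(t)\,dt\Bigr| \;=\; |\theta(s)-\theta(0)| \;=\; \eta.
\end{equation*}

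There is no real obstacle here: the only subtlety is the choice of a continuous angular lift $\theta(t)$, which is legitimate since $r(t)\geq 1>0$ means the curve avoids the origin and the argument is well defined as a continuous (in fact piecewise $C^1$) function by standard covering-space arguments. The inequality chain above uses only $\sqrt{a^2+b^2}\geq |b|$, the hypothesis $r\geq 1$, and the triangle inequality for integrals, and each step is sharp exactly when $\gamma$ is an arc of the unit circle traversed monotonically from angle $0$ to angle $\eta$.
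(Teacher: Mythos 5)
Your proof is correct and follows essentially the same route as the paper's: write the arc length in polar coordinates, drop the $\dot r^2$ term, use $r \geq 1$ to bound $r|\dot\theta|$ below by $|\dot\theta|$, and integrate. The only difference is that you spell out the implicit steps (the choice of a continuous angular lift, and the triangle inequality collapsing total angular variation to net displacement) which the paper compresses into a single chain of inequalities.
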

\begin{proof} Consider $\gamma$ as defined above with $r,\theta:[0,s] \rightarrow {\mathbb R}$. Then we immediately have that:
$${\mathcal L}(\gamma)=\int_0^{s}\,|\gamma'(t)|\,dt =\int_0^{s} \sqrt{{|r'(t)|^2+ r(t)^2}\,|\theta'(t)|^2}\,\, dt \geq  \int_0^{s} { |{\theta}'(t)|}\,\, dt\geq{\eta}$$
\end{proof}

\begin{lemma}\label{seg}\rm  For any $C^1$ curve $\gamma :[0,s]\rightarrow {\mathbb R}^2$ with $\gamma(0)=(0,0)$, $\gamma(s)=(x,z)$ and $z\geq 0$, one has ${\mathcal L}(\gamma) \geq z$ (see Figure \ref{figlemmarc} right).

\end{lemma}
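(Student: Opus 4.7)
The statement is the standard fact that the length of a curve bounds the Euclidean distance between its endpoints along any coordinate direction, here specialised to the vertical direction. My plan is to mimic the computation already used for Lemma \ref{rad}, but in Cartesian rather than polar coordinates, since the target quantity $z$ is the $y$-coordinate of $\gamma(s)$.

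Concretely, I would write $\gamma(t)=(x(t),y(t))$ with $x,y\in C^1([0,s],\mathbb{R})$, so that $y(0)=0$ and $y(s)=z$. The arc-length integral then gives
$$\mathcal{L}(\gamma)=\int_0^s\sqrt{x'(t)^2+y'(t)^2}\,dt\geq \int_0^s|y'(t)|\,dt\geq \int_0^s y'(t)\,dt = y(s)-y(0)=z,$$
where the first inequality uses $\sqrt{a^2+b^2}\geq|b|$, the second uses $|u|\geq u$, and the last equality is the fundamental theorem of calculus applied to the $C^1$ function $y(t)$. The hypothesis $z\geq 0$ ensures the chain of inequalities actually yields a non-trivial bound; if $z<0$ the inequality would still hold but be uninteresting, and the proof would instead bound $\mathcal{L}(\gamma)$ below by $|z|$ by choosing the opposite sign in the step $|y'(t)|\geq -y'(t)$.

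There is no real obstacle here: the argument is a textbook projection estimate and uses only the $C^1$ regularity of $\gamma$, not the bounded curvature hypothesis. The only thing worth being careful about is phrasing consistent with Lemma \ref{rad}, so I would present the computation as a single displayed chain of inequalities, making explicit which elementary inequality is invoked at each step, and noting the role of $z\geq 0$ at the end. The geometric content — that a straight vertical segment realises the minimum and any deviation strictly increases length — can be noted in passing but is not needed for the formal proof.
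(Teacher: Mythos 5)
Your proof is correct and is essentially identical to the paper's: both write $\gamma=(x(t),y(t))$, bound the arc-length integrand below by $|y'(t)|$, and then by $y'(t)$, invoking the fundamental theorem of calculus. The only difference is that you spell out the final step $\int_0^s y'(t)\,dt = y(s)-y(0)=z$, which the paper leaves implicit.
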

\begin{proof} Consider $\gamma (t)=(x(t),y(t))$ with $x,y:[0,s] \rightarrow {\mathbb R}$. Then,
$${\mathcal L}(\gamma)=\int_0^s\,|\gamma'(t)|\,dt =\int_0^s \sqrt{x'(t)^2+ y'(t)^2}\,\, dt \geq \int_0^s |{y'(t)}|\,\, dt \geq z$$
\end{proof}

\begin{definition} A bounded curvature path $\gamma: I \rightarrow {\mathbb R}^2$ in $\Gamma(\mbox{\sc x,y})$ has a negative direction if there exists $t\in I$ such that  $\langle X, \gamma'(t)\rangle<0$.
 \end{definition}

\begin{corollary}\label{nonedir} A fragment does not have a negative direction.
\end{corollary}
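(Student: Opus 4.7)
The plan is to use the curvature bound directly, exploiting the numerical fact that $\pi/2 > 1$ while a fragment has length strictly less than $1$.

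First, I would normalise coordinates so that the fragment $\gamma|_{[a,b]}$ starts at the origin with tangent $X = (1,0)$. Since $\gamma$ is $C^1$ and parametrised by arc length, $\gamma'$ lifts to a continuous angle function $\theta : [a,b] \to \mathbb R$ with $\theta(a) = 0$ and $\gamma'(t) = (\cos \theta(t), \sin \theta(t))$.

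Next, on each subinterval where $\gamma$ is $C^2$, differentiation gives $\|\gamma''(t)\| = |\theta'(t)|$, so the curvature bound $\|\gamma''\| \leq 1$ becomes $|\theta'(t)| \leq 1$. Integrating on each smooth piece and patching across the finitely many breakpoints (where $\theta$ is continuous by the $C^1$ hypothesis on $\gamma$) yields $|\theta(t) - \theta(a)| \leq t - a$ for every $t \in [a,b]$.

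Finally, by Definition \ref{frag}, the arc length $t - a = {\mathcal L}(\gamma, a, t) \leq {\mathcal L}(\gamma, a, b) < 1 < \pi/2$. Thus $|\theta(t)| < \pi/2$ throughout the fragment, forcing $\langle X, \gamma'(t)\rangle = \cos \theta(t) > 0$ for every $t \in [a,b]$, contradicting the existence of a negative direction.

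I do not foresee a genuine obstacle. The one mild technicality is propagating $|\theta'| \leq 1$ across the finitely many points where $\gamma''$ is undefined; this is handled by applying the bound piecewise and using continuity of $\theta$ to glue the estimates. An alternative, more geometric route would be to invoke Proposition \ref{r1r3p}: a tangent that rotates by more than $\pi/2$ while the curve remains in ${\mathcal R}(\mbox{\sc z})$ would force the fragment to meet $\partial {\mathcal R}^{\pm}(\mbox{\sc z})$, contradicting that proposition. However, the direct angle bound above is more economical.
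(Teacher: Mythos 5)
Your proof is correct and in substance matches the paper's: the paper's proof invokes Lemma \ref{rad} to deduce that a negative direction forces length $> \pi/2$, and the content of that deduction is exactly your lifted-angle estimate $|\theta(t)-\theta(a)| \le \int |\theta'| \le t-a$, i.e.\ total turning of the tangent is bounded by arc length, applied (implicitly in the paper) to the tangent indicatrix $\gamma'$ which has $r\equiv 1$. You have simply unpacked the application of Lemma \ref{rad} into an explicit angle-function argument, and the handling of the finitely many $C^2$-breakpoints by continuity of $\theta$ is a fine point the paper leaves tacit but which you are right to address.
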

\begin{proof} It is easy to see from Lemma \ref{rad} that a negative direction implies the length of a bounded curvature path is greater than $\frac{\pi}{2}$. Since fragments have length less than $1$ the result follows.
\end{proof}

{ \begin{figure} [[htbp]
 \begin{center}
\includegraphics[width=.7\textwidth,angle=0]{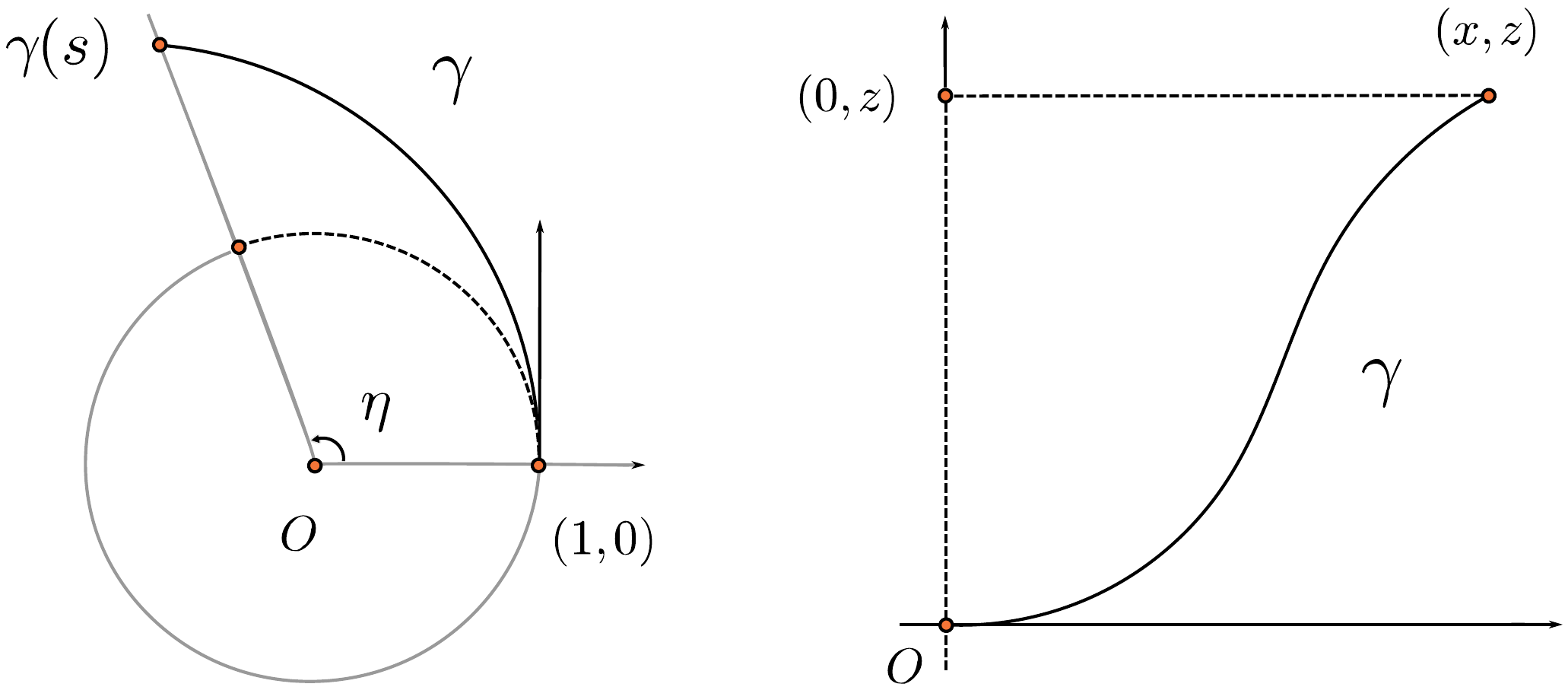}
\end{center}
\caption{Left: Illustration of Lemma \ref{rad}. Right: Illustration of Lemma \ref{seg}.}
 \label{figlemmarc}
\end{figure}}

In \cite{paperc} we proved that for certain endpoints in $T{\mathbb R}^2$ there exists a compact region $\Omega\subset{\mathbb R}^2$ that {\it traps} embedded bounded curvature paths (see Figure \ref{figregform} right). That is, no embedded bounded curvature lying in $\Omega$ can be deformed while preserving the curvature bound to a path having a point in the complement of $\Omega$.

\begin{proposition} \label{r1r3pr} A fragment $\gamma$ such that $\gamma(t)=z$ and $\gamma'(t)=Z$ is contained in ${\mathcal R}(\mbox{\sc z})$. \end{proposition}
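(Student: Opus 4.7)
Denote $\mathcal R(\mbox{\sc z}) = D_z \setminus (\mathrm{int}(D_l) \cup \mathrm{int}(D_r))$, where $D_l$, $D_r$ are the closed unit disks bounded by the adjacent circles $\mbox{\sc C}_l(\mbox{\sc z})$, $\mbox{\sc C}_r(\mbox{\sc z})$. My plan is to establish the two containments $\gamma(I)\subset D_z$ and $\gamma(I)\cap(\mathrm{int}(D_l)\cup\mathrm{int}(D_r))=\emptyset$ separately.

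The first containment will be an immediate consequence of the arc-length parametrisation together with $\mathcal L(\gamma)<1$. For any $u\in I$,
$$|\gamma(u)-z|=\Bigl|\int_t^u\gamma'(\tau)\,d\tau\Bigr|\leq |u-t|\leq \mathcal L(\gamma)<1,$$
so $\gamma(I)\subset\mathrm{int}(D_z)$.

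For the second containment I will argue by contradiction. Suppose $\gamma(u_0)\in\mathrm{int}(D_l)\cup\mathrm{int}(D_r)$ for some $u_0\in I$; by symmetry take $u_0>t$ and set
$$u^*=\inf\bigl\{u\in(t,u_0]\colon\gamma(u)\in\mathrm{int}(D_l)\cup\mathrm{int}(D_r)\bigr\}.$$
A Taylor expansion at $t$ together with $|\gamma''|\leq 1$ gives $u^*>t$; by continuity $\gamma(u^*)\in\mbox{\sc C}_l(\mbox{\sc z})\cup\mbox{\sc C}_r(\mbox{\sc z})$; and $\gamma|_{[t,u^*]}$ lies entirely in $\mathcal R(\mbox{\sc z})$, by the first containment and the definition of $u^*$. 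I will then apply Proposition \ref{r1r3p} to this sub-fragment (which has length less than $1$, starts at $z$ tangent to $Z$, and lies in $\mathcal R(\mbox{\sc z})$) to conclude that it cannot meet $\mbox{\sc C}_l(\mbox{\sc z})$ or $\mbox{\sc C}_r(\mbox{\sc z})$ except possibly at $z$. To rule out the possibility $\gamma(u^*)=z$, I will invoke Corollary \ref{nonedir} on the sub-fragment to get $\langle Z,\gamma'(\tau)\rangle\geq 0$ for $\tau\in[t,u^*]$, hence $x\circ\gamma$ non-decreasing on $[t,u^*]$; returning to $x(u^*)=0$ would force $x\circ\gamma\equiv 0$, contradicting $\gamma'(t)=(1,0)$. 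The symmetric case $u_0<t$ will be handled by the obvious analogue using $u^*=\sup\{\cdots\}$ and Proposition \ref{r1r3p} applied to $\gamma|_{[u^*,t]}$.

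The step I expect to need the most care is the degenerate case where $\gamma|_{[t,u^*]}$ lies entirely in $\partial\mathcal R(\mbox{\sc z})$, which Proposition \ref{r1r3p} explicitly excludes from its hypothesis. In that case the sub-fragment must coincide with an arc of one of the adjacent circles (its osculating circle at $z$ is forced to be that circle), so $\gamma'(u^*)$ is tangent to that adjacent circle at $\gamma(u^*)$; the bound $|\gamma''|\leq 1$ then prevents $\gamma$ from curving strictly into $\mathrm{int}(D_l)\cup\mathrm{int}(D_r)$ immediately past $u^*$, contradicting the very definition of $u^*$ and closing the argument.
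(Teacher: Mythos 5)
Your proof takes the same route as the paper's: first show the fragment is trapped in $D_z$, then invoke Proposition \ref{r1r3p} to exclude the interiors of the adjacent disks. It is correct, and in fact renders explicit several points the paper leaves implicit---you use the direct unit-speed integral estimate where the paper cites Lemma \ref{seg}, you set up a genuine first-crossing argument (avoiding the mild circularity of applying Proposition \ref{r1r3p}, whose hypothesis already assumes the fragment maps into ${\mathcal R}(\mbox{\sc z})$), you rule out $\gamma(u^*)=z$ via Corollary \ref{nonedir}, and you treat the degenerate case of a fragment running along an adjacent circle, which the paper's statement of Proposition \ref{r1r3p} excludes but whose resolution is not written out.
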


\begin{proof} Suppose a fragment $\gamma$ intersects the boundary of the unit disk $D_z$ at $P=\gamma(t)$ for some $t\in I$. By virtue of Lemma \ref{seg} the length of $\gamma$ is greater than or equal to the length of $\overline{zP}=1$ leading to a contradiction (see Figure \ref{figregform} left). By Proposition \ref{r1r3p} fragments inside ${\mathcal R}(\mbox{\sc z})$ that are not arcs of an adjacent circle do not intersect $\partial{\mathcal R}(\mbox{\sc z})$. We conclude that fragments are confined in ${\mathcal R}(\mbox{\sc z})$.
 \end{proof}

{ \begin{figure} [[htbp]
 \begin{center}
\includegraphics[width=1\textwidth,angle=0]{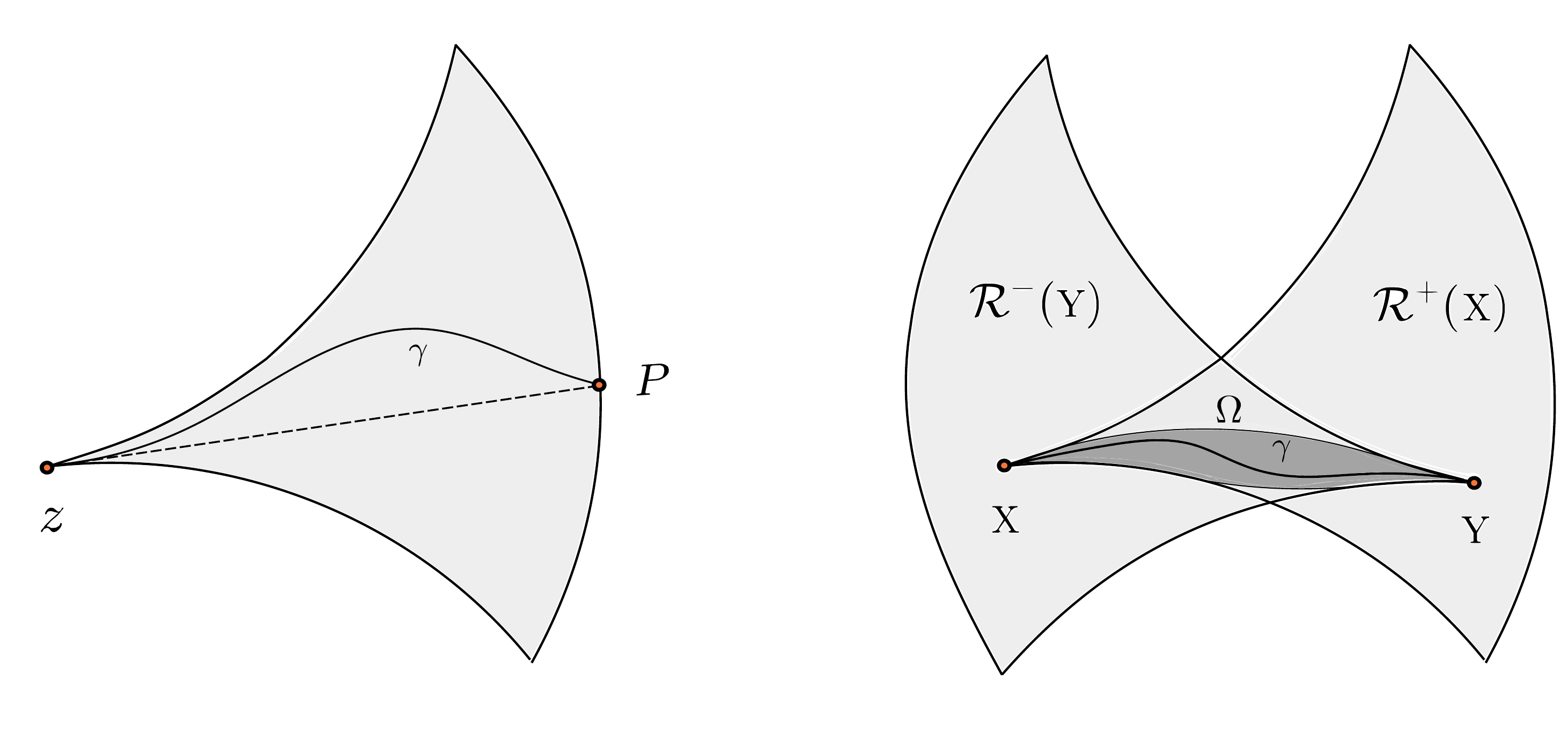}
\end{center}
\caption{Left: Applying Lemma \ref{seg} to $\gamma$ in ${\mathcal R}^+(\mbox{\sc z})$. Right: From \cite{paperc} we conclude that a fragment and replacement are trapped in a compact subset $\Omega \subset{\mathcal R}^+(\mbox{\sc x})\cap{\mathcal R}^-(\mbox{\sc y})$. }
\label{figregform}
\end{figure}}

The definition of a $cs$ path is such that the points where the curvature is not defined are precisely those between concatenated {\sc c} and {\sc s} components. In a {\sc csc} path call these points $P$ and $Q$ (see Figure \ref{figfunlem}).

\begin{proposition} \label{construct} For a fragment in $\Gamma(\mbox{\sc x,y})$ there exists a {\sc csc} path in $\Gamma(\mbox{\sc x,y})$ having circular components of length less than $\pi$.
 \end{proposition}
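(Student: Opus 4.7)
The plan is to construct an explicit {\sc csc} path in $\Gamma(\mbox{\sc x,y})$ with both circular components of length less than $\pi$, by selecting, from the four standard {\sc csc} candidates, whichever one is compatible with the constrained position of $\mbox{\sc y}$.

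\emph{Setup.} Place $x=(0,0)$ with $X=(1,0)$, so that $c_l(\mbox{\sc x})=(0,1)$ and $c_r(\mbox{\sc x})=(0,-1)$. By Proposition~\ref{r1r3pr} the fragment is confined to $\mathcal{R}(\mbox{\sc x})$, and Corollary~\ref{nonedir} then gives $y\in\mathcal{R}^+(\mbox{\sc x})$. Writing $Y=(\cos\alpha,\sin\alpha)$, the bound $|\alpha|\leq\int_0^s|\gamma''(t)|\,dt\leq s<1$ forces $|\alpha|<1<\pi/2$. Since reflection across the $x$-axis swaps the labels $l$ and $r$ and preserves the class of fragments, I may assume $\alpha\geq 0$.

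\emph{Candidates and main case.} There are four {\sc csc} candidates, one for each pairing of an adjacent circle at $\mbox{\sc x}$ with an adjacent circle at $\mbox{\sc y}$. The same-side pairings {\sc lsl} and {\sc rsr} are joined by an outer common tangent, which always exists between two unit circles; the opposite-side pairings {\sc lsr} and {\sc rsl} are joined by an inner common tangent, which exists iff the two centres are at distance at least $2$. For the {\sc lsl} candidate, set $\vec{d}=c_l(\mbox{\sc y})-c_l(\mbox{\sc x})=(y_1-\sin\alpha,\,y_2+\cos\alpha-1)$ and let $\beta\in(-\pi,\pi]$ be its directed angle from $X$. A direct application of the outer-tangent formula for two unit circles shows that the two {\sc lsl} arcs are traversed counter-clockwise through angles $\beta\bmod 2\pi$ and $(\alpha-\beta)\bmod 2\pi$. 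If $\beta\in[0,\alpha]$, both lie in $[0,\alpha]\subset[0,1)\subset[0,\pi)$ and we are done.

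\emph{Fallback.} When $\beta<0$ I switch to the {\sc rsl} candidate, and when $\beta>\alpha$ I switch to {\sc lsr}. In each case one first has to verify that the relevant opposite-side centres satisfy $|c_r(\mbox{\sc x})-c_l(\mbox{\sc y})|\geq 2$ (respectively $|c_l(\mbox{\sc x})-c_r(\mbox{\sc y})|\geq 2$), so that the inner common tangent is defined; this uses the restriction on $y$ coming from Proposition~\ref{r1r3pr} together with the turning bound $|\alpha|<1$. The analogous inner-tangent arc-length formula together with $|\alpha|<1$ then forces both arcs of the selected candidate to have length below $\pi$.

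\emph{Main obstacle.} The technical heart of the argument is the fallback step: checking that each failure mode of {\sc lsl} is compensated for by a well-defined inner-tangent {\sc csc} whose arcs remain below $\pi$. This reduces to a case analysis inside $\mathcal{R}^+(\mbox{\sc x})$ and relies on Lemmas~\ref{rad} and \ref{seg} (via Proposition~\ref{r1r3pr}) to pin down the position of $y$ tightly enough that the inner-tangent existence inequality holds.
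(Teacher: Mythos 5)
Your plan diverges from the paper's: the paper selects the {\sc csc} type according to which adjacent circles the chord $\overline{xy}$ enters, while you classify by the directed angle $\beta$ of $c_l(\mbox{\sc y})-c_l(\mbox{\sc x})$. That is a legitimate alternative decomposition, and the main case ($\beta\in[0,\alpha]$ yields {\sc lsl} arcs of lengths $\beta$ and $\alpha-\beta$, both $<1<\pi$) is correct.

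The gap is in the fallback. You assert that the inner common tangent required for {\sc rsl} (when $\beta<0$) or {\sc lsr} (when $\beta>\alpha$) exists because ``the restriction on $y$ coming from Proposition~\ref{r1r3pr} together with the turning bound $|\alpha|<1$'' pins down $y$ tightly enough. Those two hypotheses do \emph{not} imply the inner-tangent inequality. Take $y=(0.5,-0.1)$, $\alpha=0$. Then $y\in\mathcal{R}^+(\mbox{\sc x})$ (one checks $|y|<1$, $|y-(0,1)|>1$, $|y-(0,-1)|>1$), and $|\alpha|<1$. With these data $\beta\approx-0.2<0$, so your scheme sends you to {\sc rsl}. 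But $c_r(\mbox{\sc x})=(0,-1)$, $c_l(\mbox{\sc y})=(0.5,0.9)$, and $|c_r(\mbox{\sc x})-c_l(\mbox{\sc y})|=\sqrt{3.86}\approx1.965<2$, so the inner tangent does not exist; and one can check directly that none of {\sc lsl}, {\sc rsr}, {\sc lsr}, {\sc rsl} admits both arcs of length $<\pi$ for this endpoint pair. The proposition is not contradicted, because this $(y,\alpha)$ is in fact \emph{not} reachable by any fragment of length $<1$ (a short calculation with $|\theta(t)|\le\min(t,s-t)$ shows $y_1=0.5$ forces $|y_2|<0.07$). But that is exactly the information your argument fails to invoke: you have replaced ``there exists a fragment joining $\mbox{\sc x}$ to $\mbox{\sc y}$'' by the strictly weaker pair of conditions $y\in\mathcal{R}^+(\mbox{\sc x})$ and $|\alpha|<1$, and the fallback step genuinely needs the stronger reachability hypothesis (or some consequence of it beyond Proposition~\ref{r1r3pr} and Corollary~\ref{nonedir}) to guarantee the inner tangent exists. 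As written, the ``technical heart'' you flag is not merely unfinished bookkeeping --- the cited constraints are insufficient, and the verification would fail if carried out literally.
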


\begin{proof} Let $\gamma $ be a fragment with endpoint condition $\mbox{\sc x,y}\in T{\mathbb R}^2$. By Corollary \ref{nonedir} fragments do not have a negative direction, therefore, there is a {\it unique} line segment from $x$ tangent to $\mbox{\sc C}_l(\mbox{\sc y})$ or $\mbox{\sc C}_r(\mbox{\sc y})$. Consider the segment $\overline{xy}$. We can have three scenarios: $\overline{xy}$ crosses $\mbox{\sc C}_l(\mbox{\sc x})$, $\overline{xy}$ crosses $\mbox{\sc C}_r(\mbox{\sc x})$ or $\overline{xy}$ is tangent to $\mbox{\sc C}_l(\mbox{\sc x})$ and $\mbox{\sc C}_r(\mbox{\sc x})$. Symmetrically we can have: $\overline{yx}$ crosses $\mbox{\sc C}_l(\mbox{\sc y})$, $\overline{yx}$ crosses $\mbox{\sc C}_r(\mbox{\sc y})$ or $\overline{yx}$ is tangent to $\mbox{\sc C}_l(\mbox{\sc y})$ and $\mbox{\sc C}_r(\mbox{\sc y})$. First, suppose the segment $\overline {xy}$ crosses both $\mbox{\sc C}_r(\mbox{\sc x})$ and $\mbox{\sc C}_r(\mbox{\sc y})$. Observe that there is a common tangent line to $\mbox{\sc C}_r(\mbox{\sc x})$ and $\mbox{\sc C}_r(\mbox{\sc y})$ with respective tangent points say $P$ and $Q$. Also, there are arcs in $\mbox{\sc C}_r(\mbox{\sc x})$, $\mbox{\sc C}_r(\mbox{\sc y})$ from $x$ to $P$ and $Q$ to $y$ traversed counterclockwise. We define $\beta:I\rightarrow {\mathbb R}^2$ to be the arc-length parameterized {\sc csc} path of type {\sc rsr} if the segment $\overline {xy}$ crosses both $\mbox{\sc C}_r(\mbox{\sc x})$ and $\mbox{\sc C}_r(\mbox{\sc y})$ by considering the common tangent segment $\overline{PQ}$ to $\mbox{\sc C}_r(\mbox{\sc x})$ and $\mbox{\sc C}_r(\mbox{\sc y})$ and the arc from $x$ to $P$ and the arc from $Q$ to $y$ (see Figure \ref{figfunlem} right). In this fashion, $\beta$ corresponds to a bounded curvature path that starts from $x$,  travels along $\mbox{\sc C}_r(\mbox{\sc x})$ until reaching $P$, then travels along $\overline{PQ}$ and then travels along $\mbox{\sc C}_r(\mbox{\sc y})$ from $Q$ until reaching $y$. Similar reasoning applies for the construction of  {\sc lsr}, {\sc lsl}, {\sc rsl} path types under the other possible intersections of $\overline{xy}$ with the adjacent circles. By Corollary \ref{nonedir} fragments do not have a negative direction and by Lemma \ref{rad} the replacement has circular arcs of length less than $\pi$. 
\end{proof}

The following key result is proved by a direct projection argument. The idea is to divide up a fragment into at most three pieces and then project such pieces onto the replacement constructed in Proposition \ref{construct}. We conclude that the replacement is never longer than the fragment.

\begin{lemma}\label{fundlemma} The length of a replacement is at most the length of the associated fragment with equality if and only if these paths are identical (see Figure \ref{figfunlem}).
\end{lemma}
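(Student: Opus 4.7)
The plan is to partition the fragment $\gamma$ into three consecutive sub-paths matching the three components of the replacement $\beta$, and then bound the length of each sub-path below by the length of the corresponding component using Lemmas \ref{rad} and \ref{seg}. Without loss of generality consider a replacement of type {\sc rsr} with circular arc $\mbox{\sc A}_1\subset \mbox{\sc C}_r(\mbox{\sc x})$ from $x$ to $P$, segment $\mbox{\sc S}=\overline{PQ}$, and circular arc $\mbox{\sc A}_2\subset \mbox{\sc C}_r(\mbox{\sc y})$ from $Q$ to $y$. Let $\ell_P$ and $\ell_Q$ be the lines through $P$ and $Q$ respectively that are perpendicular to $\overline{PQ}$; these are precisely the common tangents to $\mbox{\sc C}_r(\mbox{\sc x})$ and $\mbox{\sc C}_r(\mbox{\sc y})$ at $P$ and $Q$, and they partition the plane into three strips.

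First I would locate the crossings $t_1=\min\{t\in I:\gamma(t)\in\ell_P\}$ and $t_2=\max\{t\in I:\gamma(t)\in\ell_Q\}$. Using Proposition \ref{r1r3pr} (so that $\gamma$ is confined to $\mathcal R(\mbox{\sc x})\cap\mathcal R(\mbox{\sc y})$) together with Corollary \ref{nonedir} (no negative direction), I would argue that $t_1$ and $t_2$ exist, that $t_1\leq t_2$, and that the three sub-paths $\gamma|_{[0,t_1]}$, $\gamma|_{[t_1,t_2]}$, $\gamma|_{[t_2,s]}$ lie respectively on the $x$-side of $\ell_P$, inside the strip between $\ell_P$ and $\ell_Q$, and on the $y$-side of $\ell_Q$. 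Crucially, containment in $\mathcal R(\mbox{\sc x})$ forces $\gamma|_{[0,t_1]}$ to stay outside the open unit disk bounded by $\mbox{\sc C}_r(\mbox{\sc x})$, and analogously for $\gamma|_{[t_2,s]}$ with respect to $\mbox{\sc C}_r(\mbox{\sc y})$.

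Next comes the projection step. In polar coordinates centred at $c_r(\mbox{\sc x})$, the sub-path $\gamma|_{[0,t_1]}$ satisfies $r(t)\geq 1$, so Lemma \ref{rad} yields
\[
\mathcal L(\gamma|_{[0,t_1]})\ \geq\ \int_0^{t_1}|\theta'(t)|\,dt\ \geq\ \mathcal L(\mbox{\sc A}_1),
\]
once one verifies that the endpoint $\gamma(t_1)\in\ell_P$ has polar angle at least that of $P$, which follows from the no-reversal property combined with the geometry of $\mathcal R(\mbox{\sc x})$. Choosing coordinates with $\overline{PQ}$ on the vertical axis, Lemma \ref{seg} applied to $\gamma|_{[t_1,t_2]}$ (whose endpoints lie on the parallel lines $\ell_P$ and $\ell_Q$) gives $\mathcal L(\gamma|_{[t_1,t_2]})\geq |PQ|=\mathcal L(\mbox{\sc S})$. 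A symmetric radial argument centred at $c_r(\mbox{\sc y})$ produces $\mathcal L(\gamma|_{[t_2,s]})\geq\mathcal L(\mbox{\sc A}_2)$. Summing the three inequalities yields $\mathcal L(\gamma)\geq \mathcal L(\beta)$. For the equality statement, equality in Lemma \ref{rad} forces $r\equiv 1$ and $\theta$ monotone, so each outer sub-path coincides with the corresponding arc of the adjacent circle; equality in Lemma \ref{seg} forces the middle sub-path to be the straight segment $\overline{PQ}$. Thus equality holds iff $\gamma=\beta$.

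The main obstacle will be the monotone-crossing argument, namely showing that $\gamma$ meets $\ell_P$ before $\ell_Q$, that it does not re-enter the outer strips after leaving them, and that the angular coordinate of $\gamma(t_1)$ as seen from $c_r(\mbox{\sc x})$ genuinely exceeds that of $P$ (and symmetrically at $Q$). This requires combining the confinement to $\mathcal R(\mbox{\sc x})\cap\mathcal R(\mbox{\sc y})$ with the no-negative-direction property to rule out angular back-tracking. Once this geometric bookkeeping is done, the three replacement types {\sc lsl}, {\sc rsl}, {\sc lsr} follow by the obvious symmetry, and the degenerate situations (e.g.\ when $\overline{xy}$ is already tangent to an adjacent circle so that $\beta$ degenerates to a {\sc cs} or {\sc sc} path) are handled by letting the length of one of $\mbox{\sc A}_1,\mbox{\sc A}_2$ shrink to zero in the inequalities above.
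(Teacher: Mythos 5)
Your proposal follows essentially the same route as the paper: split $\gamma$ by the lines through $P$ and $Q$ perpendicular to $\overline{PQ}$ (the paper's $L_1,L_2$), confine the fragment via Proposition~\ref{r1r3pr} and Corollary~\ref{nonedir}, then bound the outer pieces radially by Lemma~\ref{rad} and the middle piece by Lemma~\ref{seg}, with equality forcing $\gamma=\beta$. One small slip: you call $\ell_P,\ell_Q$ the ``common tangents'' at $P$ and $Q$, but the tangent to the adjacent circle at $P$ (resp.\ $Q$) is $\overline{PQ}$ itself; $\ell_P,\ell_Q$ are the normals through the centres and tangency points — the construction you actually use is nevertheless the right one.
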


\begin{proof} Consider a fragment $\gamma $ with endpoint condition $\mbox{\sc x,y}\in T{\mathbb R}^2$. Without loss of generality consider $x=(0,0)$ and $X=(1,0)$. Construct a replacement path $\beta$ as in Proposition \ref{construct} and suppose that $\gamma \neq \beta$. Denote by $L_1$ the line passing through the center of the first adjacent arc in $\beta$ and the common point $P$ between the first and the second component of $\beta$ and denote by $L_2$ the line passing through the center of the second arc of $\beta$ and the common point $Q$ between the second and the third component of $\beta$. Observe that $L_1$ and $L_2$ are parallel lines.  By virtue of  Proposition \ref{r1r3pr} we have that $\gamma$ must lie in ${\mathcal R}^+(\mbox{\sc x})$ and in particular $\gamma$ lies outside the interior of the adjacent circles associated with {\sc x} and {\sc y}. By continuity, the path $\gamma$ must cross the lines $L_1$ and $L_2$ at, say, the points $O$ and $N$ respectively (see Figure \ref{figfunlem}). Denote by $\gamma_1$ the portion of $\gamma$ in between $x$ and the first time $\gamma$ intersects $L_1$; denote by $\gamma_2$ the portion of $\gamma$ in between the first time $\gamma$ intersects $L_1$ and the first time $\gamma$ intersects $L_2$ and denote by $\gamma_3$ the portion of $\gamma$ in between the first time $\gamma$ intersects $L_2$ and $y$. Suppose $\gamma \neq \beta$ at some point in $\gamma_1$. By Proposition \ref{r1r3pr} the fragment $\gamma$ does not intersect $\partial{\mathcal R}^+(\mbox{\sc x})$ and therefore $P\neq O$ (also $N\neq Q$). By virtue of Lemma \ref{rad} the length of $\gamma_1$ is greater than the length of $\beta$ (the angle traveled in the first component of $\beta$ from $x$ to $P$). By applying Lemma \ref{seg} to $\gamma_2$ we conclude that the length of $\gamma$ in between $O$ and $N$ is greater than the length of the segment $PQ$. By applying Lemma \ref{rad} (as we did for $\gamma_1$) we have that the length of $\gamma_3$ is greater than or equal to the length of $\beta$ in between $Q$ and $y$. Therefore, $\gamma$ must be longer than $\beta$. The cases $\gamma \neq \beta$ at some point in $\gamma_2$ or $\gamma_3$ are proven identically as above. \end{proof}

 Notice that fragments have length less than $1$. Such a bound is sufficient to guarantee that the replacement is a {\sc csc} path possibly with some components of zero length. On the other hand, {\sc csc} paths can be sometimes constructed for longer pieces in a bounded curvature path.

{ \begin{figure} [[htbp]
 \begin{center}
\includegraphics[width=1\textwidth,angle=0]{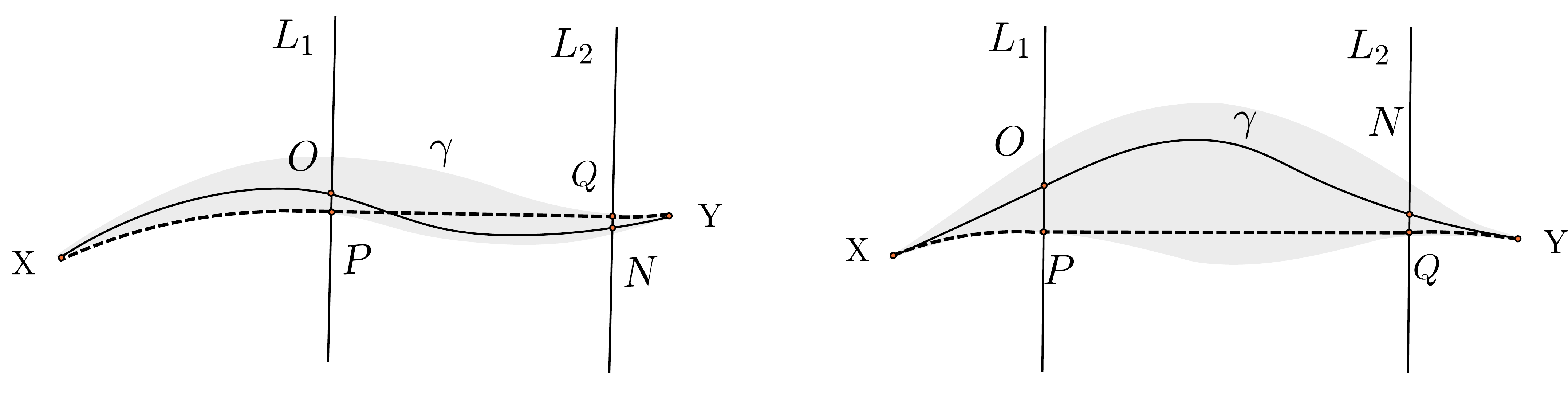}
\end{center}
\caption{The points $P$ and $Q$ are the points in the dashed {\sc csc} path where the curvature is not defined. The dashed trace corresponds to the replacement $\beta$ constructed in Lemma \ref{fundlemma}. The grey regions correspond to $\Omega$.}
 \label{figfunlem}
\end{figure}}

\begin{proposition} \label{csform} A path $\gamma \in \Gamma(\mbox{\sc x,y})$ may be replaced by a $cs$ path of length at most the length of $\gamma$.
\end{proposition}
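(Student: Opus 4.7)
The plan is to combine a fragmentation of $\gamma$ with the fragment--wise replacement machinery of Proposition \ref{construct} and Lemma \ref{fundlemma}, and then concatenate the replacements.

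First I would invoke Definition \ref{frag} to choose a fragmentation $0=t_0<t_1<\dots<t_m=s$ of $\gamma$ so that each restriction $\gamma_i := \gamma|_{[t_{i-1},t_i]}$ is a fragment. For each $i$, I would set $\mbox{\sc x}_i := (\gamma(t_{i-1}),\gamma'(t_{i-1}))$ and $\mbox{\sc y}_i := (\gamma(t_i),\gamma'(t_i))$; these lie in $T{\mathbb R}^2$ because $\gamma$ is $C^1$ and arc-length parametrised, and $\gamma_i \in \Gamma(\mbox{\sc x}_i,\mbox{\sc y}_i)$. Proposition \ref{construct} then produces a {\sc csc} replacement $\beta_i \in \Gamma(\mbox{\sc x}_i,\mbox{\sc y}_i)$ whose circular components have length less than $\pi$, and Lemma \ref{fundlemma} gives ${\mathcal L}(\beta_i) \leq {\mathcal L}(\gamma_i)$.

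Next I would define $\beta$ to be the concatenation $\beta_1 * \beta_2 * \cdots * \beta_m$, reparametrised by arc length. Because $\beta_i$ and $\beta_{i+1}$ share a common endpoint with matching unit tangent vector $\gamma'(t_i)$, the concatenation is $C^1$; each $\beta_i$ is itself piecewise $C^2$ with curvature $0$ or $1$, so $\beta$ is piecewise $C^2$ with $\|\beta''\|\leq 1$ wherever defined. Hence $\beta$ is a bounded curvature path, and since it is a finite concatenation of line segments and unit-circle arcs, it is a $cs$ path in $\Gamma(\mbox{\sc x,y})$. Summing the per-fragment inequalities,
\[
{\mathcal L}(\beta)=\sum_{i=1}^m{\mathcal L}(\beta_i)\leq \sum_{i=1}^m{\mathcal L}(\gamma_i)={\mathcal L}(\gamma).
\]

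The only subtle point I anticipate is verifying that $\beta$ is a legitimate element of $\Gamma(\mbox{\sc x,y})$ rather than a mere piecewise union: one must check $C^1$-ness at each junction $\beta(t_i)$ and the global curvature bound. Both, however, fall out immediately from the fact that each $\beta_i$ has endpoint condition $(\gamma(t_i),\gamma'(t_i))$ matching its neighbour, and from the fact that the constituent components of any $\beta_i$ are either straight segments or arcs of unit circles (so curvature is always at most $1$ where defined). No claim is made about the number of components dropping — consecutive {\sc c}-arcs on the same adjacent circle or consecutive collinear {\sc s}-segments may of course be merged, but this is inessential for the proposition, which only asserts that $\gamma$ admits some $cs$ replacement of no greater length.
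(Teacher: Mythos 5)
Your proposal is correct and follows essentially the same approach as the paper: fragment $\gamma$, replace each fragment via Proposition \ref{construct} and Lemma \ref{fundlemma}, and concatenate. The paper states this more tersely, whereas you have helpfully spelled out the tangency-matching at the junctions $\gamma(t_i)$ that makes the concatenation a genuine $C^1$ bounded curvature path — a detail the paper leaves implicit.
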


\begin{proof} Consider a fragmentation for $\gamma \in \Gamma(\mbox{\sc x,y})$. By applying Lemma \ref{fundlemma} to each fragment we obtain a bounded curvature path of length at most the length of $\gamma$ being a finite number of concatenated {\sc csc} paths.  
\end{proof}

\section{Length Minimising Bounded Curvature Paths} \label{minimalembedded}

Here we continue the process of reducing bounded curvature paths in terms of length and complexity by looking at larger pieces in a $cs$ paths. We conclude in particular that length minimising bounded curvature paths have complexity at most three.

\begin{definition} A component of type ${\mathscr C}_1$ is a {\sc cscsc} path as shown in Figure \ref{figrep1} left and center. A component of type ${\mathscr C}_2$ is a {\sc csccsc} path as shown in Figure \ref{figrep1} right. \end{definition}

{ \begin{figure} [[htbp]
 \begin{center}
\includegraphics[width=1\textwidth,angle=0]{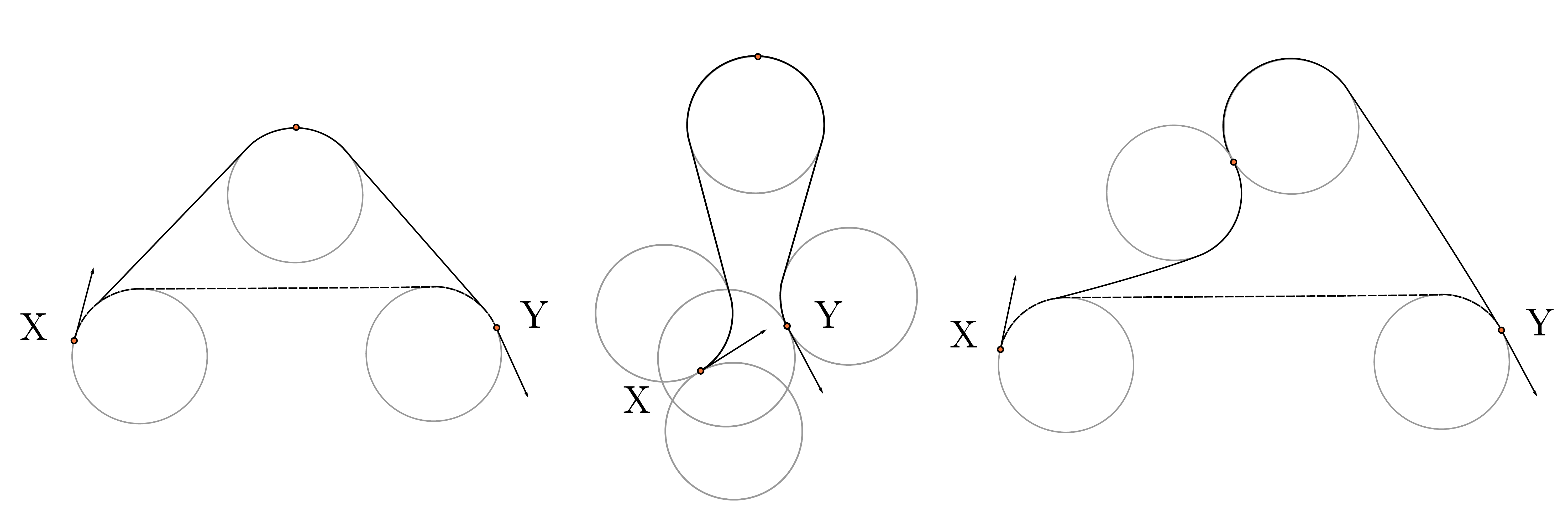}
\end{center}
\caption{Examples of components of type ${\mathscr C}_1$ and ${\mathscr C}_2$. The dashed traces in the left and right figures are replacements. The middle illustration corresponds to a non-admissible component of type ${\mathscr C}_1$. Note that no replacement can be constructed for such a component.}
 \label{figrep1}
\end{figure}}

\begin{definition}\label{norepcomp} Let $\gamma \in \Gamma(\mbox{\sc x,y})$ be a component of type ${\mathscr C}_1$ (or ${\mathscr C}_2$). Then $\gamma$ is called {\it admissible} if a replacement can be constructed in $\Gamma(\mbox{\sc x,y})$. A component of type ${\mathscr C}_1$ (or ${\mathscr C}_2$) that is not admissible is called a non-admissible component (see Figure \ref{figrep1}). \end{definition}

\begin{proposition}\label{lengthred} Given $\mbox{\sc x,y} \in T{\mathbb R}^2$. A $\it cs$ path in $\Gamma(\mbox{\sc x,y})$ containing an admissible component as a sub path can be replaced by another $\it cs$ path in $\Gamma(\mbox{\sc x,y})$ with less complexity and with the length of the latter being at most the length of the former.
\end{proposition}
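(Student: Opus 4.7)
The plan is to localise: write $\gamma = \gamma_1 \cdot \alpha \cdot \gamma_2$ where $\alpha$ is the admissible component, produce a {\sc csc} replacement $\beta$ for $\alpha$, and splice it back in. Admissibility (Definition \ref{norepcomp}) supplies a {\sc csc} path $\beta \in \Gamma(\mbox{\sc x}',\mbox{\sc y}')$ sharing the endpoint conditions of $\alpha$. Since $\alpha$ has complexity $5$ (type ${\mathscr C}_1$) or $6$ (type ${\mathscr C}_2$) while $\beta$ is {\sc csc}, the path $\gamma_1\cdot\beta\cdot\gamma_2$ is still a $cs$ path in $\Gamma(\mbox{\sc x},\mbox{\sc y})$ and its complexity is strictly less than that of $\gamma$; any further merging of the outer arcs of $\beta$ with the terminal components of $\gamma_1$ or $\gamma_2$ only decreases complexity further. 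The whole statement therefore reduces to proving $\mathcal{L}(\beta)\le\mathcal{L}(\alpha)$, with equality only when $\alpha=\beta$.

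For that comparison I would reuse the projection scheme of Lemma \ref{fundlemma}. Set coordinates so that $\mbox{\sc x}'=((0,0),(1,0))$, let $P,Q$ be the junction points of $\beta$ where the middle straight segment meets the two outer circular arcs, and let $L_1,L_2$ be the lines through the respective arc centres passing through $P$ and $Q$; these are perpendicular to $\overline{PQ}$ and hence parallel. Applying Proposition \ref{r1r3pr} to each piece of a suitable fragmentation of $\alpha$ confines $\alpha$ to the closed region ${\mathcal R}^+(\mbox{\sc x}')\cap{\mathcal R}^-(\mbox{\sc y}')$ bounded by the two outer adjacent circles of $\beta$, forcing $\alpha$ to cross each of $L_1$ and $L_2$. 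Split $\alpha$ at its first crossings with $L_1$ and $L_2$ into sub-arcs $\alpha_1,\alpha_2,\alpha_3$. Lemma \ref{rad}, in polar coordinates centred at the first outer arc centre, bounds $\mathcal{L}(\alpha_1)$ below by the arc length of the first {\sc c} of $\beta$; Lemma \ref{seg}, with normal direction along $\overline{PQ}$, gives $\mathcal{L}(\alpha_2)\ge |PQ|$; and Lemma \ref{rad} again handles $\alpha_3$. Summing yields $\mathcal{L}(\alpha)\ge\mathcal{L}(\beta)$, with equality forcing $\alpha=\beta$ by the equality cases of those two lemmas.

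The main obstacle is certifying that admissibility really does pin $\alpha$ to the configuration above, i.e.\ that no constituent {\sc c} arc of $\alpha$ turns through more than $\pi$ and that $\alpha$ does not escape the slab between $L_1$ and $L_2$. For ${\mathscr C}_1$ the two flanking straight segments geometrically trap the middle {\sc c} arc between $L_1$ and $L_2$. For ${\mathscr C}_2$ the additional {\sc cc} junction requires the observation that the two concatenated arcs together turn through less than $2\pi$, for otherwise the u-turn lower bound of Corollary 2.4 of \cite{papere} (already invoked in the proof of Proposition \ref{r1r3p}) would conflict with the existence of the replacement $\beta$ guaranteed by admissibility. Once these geometric facts are verified, the length comparison combined with the complexity count of the first paragraph concludes the proof.
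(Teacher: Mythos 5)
Your overall plan matches the paper's: splice in a {\sc csc} replacement $\beta$ for the admissible component $\alpha$ and rerun the projection argument of Lemma \ref{fundlemma} ($L_1$, $L_2$, three sub-arcs, Lemmas \ref{rad} and \ref{seg}). The complexity bookkeeping in your first paragraph is also right; merging at the splice points can only help.

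The step that does not hold up is your confinement argument. You claim that applying Proposition \ref{r1r3pr} to each piece of a fragmentation of $\alpha$ confines $\alpha$ to $\mathcal{R}^+(\mbox{\sc x}') \cap \mathcal{R}^-(\mbox{\sc y}')$. It does not: Proposition \ref{r1r3pr} confines a fragment through $z$ with direction $Z$ to $\mathcal{R}(\mbox{\sc z})$, the region determined by \emph{that fragment's own} base point. Fragmenting $\alpha$ produces confinement to a chain of regions $\mathcal{R}(\mbox{\sc z}_0), \mathcal{R}(\mbox{\sc z}_1), \dots$ centred along $\alpha$, and there is no reason their union sits inside $\mathcal{R}^+(\mbox{\sc x}') \cap \mathcal{R}^-(\mbox{\sc y}')$. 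Worse, by Definition \ref{rzlip} the set $\mathcal{R}^+(\mbox{\sc x}')$ is contained in the unit disk $D_{x'}$, whereas an admissible $\mathscr{C}_1$ or $\mathscr{C}_2$ component typically has length well above $1$ and escapes $D_{x'}$. So the conclusion you draw from Proposition \ref{r1r3pr} is simply false for $\alpha$. What the length comparison actually needs is weaker --- that $\alpha$ stays outside the interiors of the two outer circles of $\beta$ and crosses $L_1$ and $L_2$ in the right order --- and this must be argued directly from the structure of an admissible $\mathscr{C}_1$/$\mathscr{C}_2$ component (each of its own {\sc c}-arcs is tangent to, hence stays outside, a unit circle; admissibility supplies the configuration of Proposition \ref{construct}), not from the fragment machinery. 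Your back-up remark for $\mathscr{C}_2$, that a total turning of at least $2\pi$ would ``conflict with'' admissibility via the u-turn bound, is not an argument --- Corollary 2.4 of \cite{papere} gives a length lower bound, and nothing about the existence of $\beta$ contradicts $\alpha$ being long. For what it is worth, the paper's own proof is a single sentence invoking ``the same construction'' and a figure caption asserting the adaptation is trivial, so you have correctly located the point that needs care; but the fragmentation-plus-\ref{r1r3pr} route you chose to fill it is the one direction that provably cannot work.
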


\begin{proof} Consider a $cs$ path containing an admissible component of type ${\mathscr C}_1$ (or ${\mathscr C}_2$). By applying the same construction in Lemma \ref{fundlemma} to the $cs$ path in between the component of type ${\mathscr C}_1$ (or ${\mathscr C}_2$)  (see Figure \ref{figrepscsnomin} left) the length of the replacement is seen to be at most the length of the component of type ${\mathscr C}_1$ (or ${\mathscr C}_2$) concluding the proof. 
\end{proof}

{ \begin{figure} [[htbp]
 \begin{center}
\includegraphics[width=1\textwidth,angle=0]{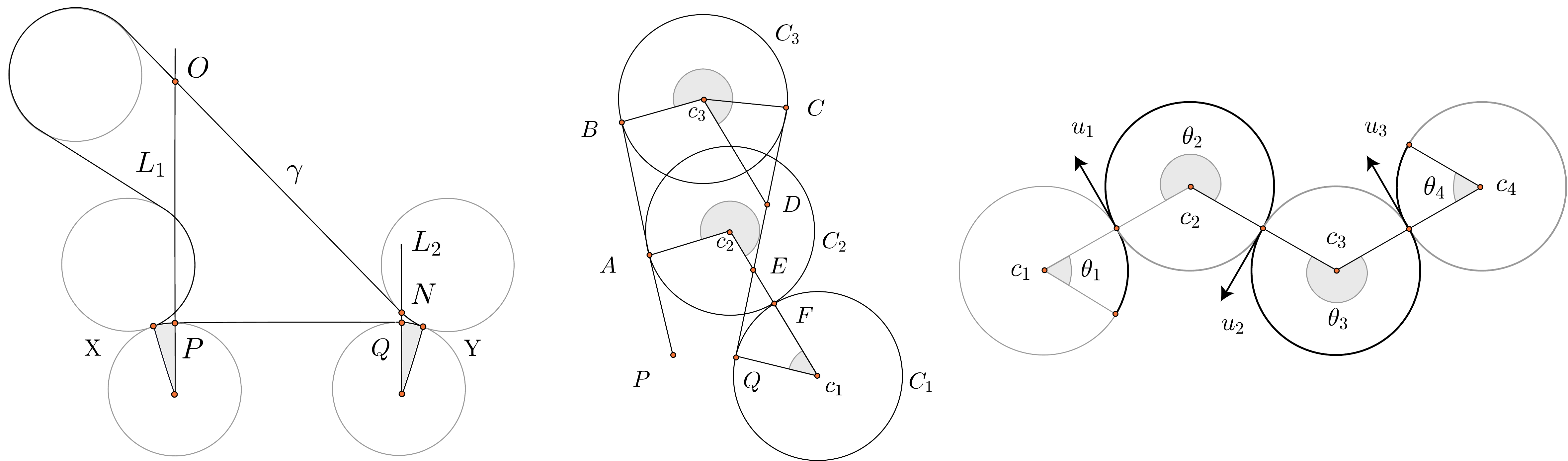}
\end{center}
\caption{Left: The restriction of a $cs$ path $\gamma$ to an admissible component of type ${\mathscr C}_1$. Lemma \ref{fundlemma} is trivially adapted to a component of type ${\mathscr C}_1$ (or ${\mathscr C}_2$) (see notation in Figure \ref{figfunlem}). Center: The notation in Proposition \ref{scsccsnomin}. The path $\gamma$ is the {\sc scs} path from $P$ to $Q$. Right: The notation for the non optimality of {\sc cccc} paths.}
 \label{figrepscsnomin}
\end{figure}}

\begin{theorem} \label{scsccsnomin} Components of type ${\mathscr C}_1$ and ${\mathscr C}_2$ are not length minimisers.
\end{theorem}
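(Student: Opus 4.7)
The plan is to split the argument into two cases according to whether the component $\gamma$ is admissible in the sense of Definition \ref{norepcomp}. The admissible case is essentially already handled: by Proposition \ref{lengthred}, $\gamma$ is replaced by a $cs$ path $\gamma'$ with strictly smaller complexity and $\mathcal{L}(\gamma')\le\mathcal{L}(\gamma)$. To upgrade this to a strict inequality, I would appeal to the ``only if'' clause of Lemma \ref{fundlemma} (suitably adapted to the longer sub-path between $P$ and $Q$ in Figure \ref{figrepscsnomin}): equality would force $\gamma$ to coincide pointwise with its replacement, which is incompatible with the drop in complexity. Hence admissible components of type $\mathscr{C}_1$ or $\mathscr{C}_2$ cannot minimise length.

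The non-admissible case is the genuine content of the theorem. My plan is first to pin down \emph{why} admissibility fails: with the notation of Figure \ref{figrepscsnomin} centre, the {\sc scs} path from $P$ to $Q$ violates the hypotheses of Proposition \ref{construct} precisely when a pair of adjacent circles along the middle arcs intersects in a way that prevents a common tangent segment of the required orientation. The key geometric observation I would extract from this is that two of the middle adjacent circles must be tangent or overlap, so one can slide along $\gamma$ and locate a {\sc cccc} sub-path whose endpoint-plus-tangent data is inherited from $\gamma$; this is exactly the configuration indicated by Figure \ref{figrepscsnomin} right.

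Having reduced the problem to {\sc cccc}, the next step is to show that a {\sc cccc} path is never a length minimiser. This is a one-parameter argument: with the two outer arcs' tangency data fixed, the lengths of the two middle arcs are linked by a single scalar parameter (the point of tangency between them can be slid along the middle tangency locus). I would write the total length as a function of this parameter and check that the derivative never vanishes in the interior, so that pushing the parameter until one of the middle arcs collapses produces a strictly shorter {\sc ccc} path. Splicing this {\sc ccc} back into the original $cs$ path yields an element of $\Gamma(\mbox{\sc x,y})$ strictly shorter than $\gamma$, completing the proof.

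The main obstacle I anticipate is the reduction from non-admissible $\mathscr{C}_1$ or $\mathscr{C}_2$ to a {\sc cccc} sub-path: it requires a careful enumeration of how admissibility can fail (depending on which pair of adjacent circles obstructs the common tangent, and on the orientation {\sc rsr}, {\sc lsr}, {\sc lsl}, or {\sc rsl} of the would-be replacement), together with a verification in each subcase that the obstruction forces the requisite circle tangency. By contrast, the {\sc cccc} shortening step, while classical, should be routine once the right parametrisation is set up.
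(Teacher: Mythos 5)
Your treatment of the admissible case is sound and in fact slightly tightens the paper's argument: the paper merely invokes Proposition~\ref{lengthred} (which gives ``at most''), while you correctly note that the drop in complexity forces the replacement to differ from the original, and the ``only if'' clause of Lemma~\ref{fundlemma} then upgrades this to a strict inequality.

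The non-admissible case, however, contains a genuine gap. You claim that the failure of admissibility (two adjacent circles along the middle arcs being tangent or overlapping) lets one ``slide along $\gamma$ and locate a {\sc cccc} sub-path''. But the non-admissible component is still a {\sc cscsc} (or {\sc csccsc}) path: it contains honest line segments, and no {\sc cccc} sub-path occurs in it. The fact that the adjacent circles of those segments intersect is an obstruction to \emph{constructing a replacement}, not a statement that the path itself degenerates to a {\sc cccc} configuration. There is no passage from ``the would-be replacement's tangent circles overlap'' to ``a {\sc cccc} with inherited tangent data lies inside $\gamma$''. Your subsequent variational argument for {\sc cccc} is fine (and is essentially the paper's Proposition~\ref{ccccnopt}), but it is being applied to an object that is not there.

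What the paper actually does in the non-admissible case is quite different and much more direct. It isolates the {\sc scs} sub-path $\gamma$ from $P$ to $Q$ inside the $\mathscr{C}_1$ component (Figure~\ref{figrepscsnomin} centre), normalises so the two segments have equal length, and then explicitly builds an {\sc scc} path $\delta$ from $P$ to $Q$ using three circles $C_1,C_2,C_3$ and auxiliary points $A,B,C,D,E,F$. Two applications of Lemma~\ref{rad} (one comparing the portion of $\gamma$ between $B$ and $D$ against the arc of $C_2$ from $A$ to $F$, the other comparing the segment $\overline{QE}$ against the short arc of $C_1$ from $Q$ to $F$) give $\mathcal{L}(\delta)<\mathcal{L}(\gamma)$. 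This is a hands-on geometric comparison, not a variational or reduction-to-{\sc cccc} argument. To repair your proof you would need to replace the {\sc cccc} reduction with such a direct construction of a shorter competitor for the {\sc scs} (and, for $\mathscr{C}_2$, the {\sc scc}/{\sc ccs}) sub-path.
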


\begin{proof} Consider a component of type $\mathscr{C}_1$. If the component is admissible then by applying Proposition \ref{lengthred} the result follows. If the component of type $\mathscr{C}_1$ is non-admissible then consider the {\sc scs} part of the component of type $\mathscr{C}_1$, denote it by $\gamma$, and without loss of generality suppose the line segments in $\gamma$ have the same length. Referring to Figure \ref{figrepscsnomin} (center) for notation, we proceed to construct a path shorter than $\gamma$. Note that $\gamma$ is the {\sc scs} path from $P$ to $Q$ with the length of $\overline{PB}$ and $\overline{CQ}$ being the same. Denote by $C_1$ the left adjacent circle at $Q$. Let $C_2$ be the circle that is simultaneously tangent to $C_1$ and $\overline{PB}$. Let $C_3$ be the circle containing the middle component of $\gamma$; denote the centers of these circles in lowercase. The parallel line to $\overline{c_1\,c_2}$ passing through $c_3$ intersects $\overline{CQ}$ at $D$. The segment $\overline{c_1\,c_2}$ intersects $\overline{CQ}$ at $E$. Since $C_3$ is above $C_2$ (both circles being tangent to $\overline{PB}$) then by applying Lemma \ref{rad} we conclude that the length of $\gamma$ between the points $B$ and $D$ is greater than the length of the arc in $C_2$ between $A$ and $F$. Again by Lemma \ref{rad} we conclude that the length of the shorter arc between ${Q}$ and $F$ in $C_1$ is less than the length of the segment $\overline{QE}$. Denote by $\delta$ the {\sc scc} path from $P$ to $Q$ with first component $\overline{PA}$, second component the arc $AF$ in $C_2$ and third component the arc $FQ$ lying in $C_1$. We conclude that $\mathcal{L}(\delta)<\mathcal{L}(\gamma)$ implying that the component of type $\mathscr{C}_1$ is not a length minimiser. If a component of type $\mathscr{C}_2$ is admissible then by applying Lemma \ref{rad} the result follows. If a component of type $\mathscr{C}_2$ is non-admissible, then by applying an identical argument as in the previous paragraphs the result follows.
\end{proof}

\begin{corollary}\label{coroscs} The {\sc scs} and {\sc ccs} (or {\sc scc}) paths are not length minimisers.
\end{corollary}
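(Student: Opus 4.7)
The plan is to observe that each of the path types {\sc scs}, {\sc ccs}, {\sc scc} arises as a degenerate instance of a component of type $\mathscr{C}_1$ or $\mathscr{C}_2$, so that Theorem \ref{scsccsnomin} and the constructions used in its proof supply strictly shorter competitors in $\Gamma(\mbox{\sc x,y})$. Concretely, an {\sc scs} path is a {\sc cscsc} (type $\mathscr{C}_1$) with its two outermost circular components of zero length, while {\sc ccs} and {\sc scc} are {\sc csccsc} (type $\mathscr{C}_2$) paths in which a carefully chosen triple of components is trivial (for instance, {\sc ccs} is {\sc csccsc} with the initial {\sc s}, the second {\sc c}, and the terminal {\sc c} collapsed to zero length). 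This reframing lets us transfer the Theorem \ref{scsccsnomin} analysis to each case.

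For the {\sc scs} half of the statement the reduction is essentially immediate. The non-admissible branch of the proof of Theorem \ref{scsccsnomin} is already an argument about an {\sc scs} sub-path $\gamma$ from $P$ to $Q$, in which an explicit {\sc scc} path $\delta$ with $\mathcal{L}(\delta) < \mathcal{L}(\gamma)$ is constructed using the adjacent circle $C_1$ at $Q$ and an auxiliary circle $C_2$ tangent simultaneously to $C_1$ and to the outgoing segment of $\gamma$. Re-reading this as a statement about {\sc scs} paths in $\Gamma(\mbox{\sc x,y})$, rather than as a sub-argument about $\mathscr{C}_1$, yields the claim for {\sc scs} directly.

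For {\sc ccs} (and symmetrically {\sc scc}) I would apply Theorem \ref{scsccsnomin} to the degenerate $\mathscr{C}_2$ embedding above. When the embedding is non-admissible the theorem's $\delta$-construction gives a strictly shorter competitor directly; when it is admissible, Proposition \ref{lengthred} produces a replacement of strictly smaller complexity and length at most the original, and iterating lands inside the Dubins {\sc csc}/{\sc ccc} family. The main obstacle I anticipate is precisely this admissible sub-case: one must rule out equality of lengths and confirm a strict decrease, which amounts to verifying that the auxiliary circles $C_1, C_2, C_3$ used in the Lemma \ref{fundlemma}/Theorem \ref{scsccsnomin} projection argument remain genuinely distinct under the zero-length degenerations. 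This should follow because each of {\sc scs}, {\sc ccs}, {\sc scc} contains a genuine arc-to-segment transition at which the radial estimate of Lemma \ref{rad} cannot be saturated.
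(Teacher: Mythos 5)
For the {\sc scs} half your argument tracks the paper exactly: the non-admissible branch of Theorem \ref{scsccsnomin} is, as you observe, already a construction of a strictly shorter {\sc scc} competitor to an {\sc scs} path from $P$ to $Q$, and reading that argument as a statement about {\sc scs} paths gives the corollary directly. The degenerate-$\mathscr{C}_1$ framing ({\sc cscsc} with outer arcs of zero length) is harmless but unnecessary; the paper simply invokes ``the construction in Theorem \ref{scsccsnomin}''.

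For {\sc ccs} (and {\sc scc}) you diverge from the paper's route, and there is a gap you have identified but not closed. The paper states that an ``identical construction as the one used for the non-admissible case'' applies, i.e.\ one redoes the explicit circle-and-tangent construction ($C_1$, $C_2$, $C_3$, the auxiliary points $A, D, E, F$) adapted to the {\sc ccs} geometry, obtaining a strictly shorter path outright. You instead embed {\sc ccs} as a degenerate $\mathscr{C}_2$ component and propose to run the admissible/non-admissible dichotomy. Two things go wrong. First, in the admissible branch Proposition \ref{lengthred} as stated only yields length \emph{at most} the original, and while one can likely upgrade this to a strict inequality by appealing to the equality clause of Lemma \ref{fundlemma} (the replacement is a genuine {\sc csc} and hence not identical to a non-degenerate {\sc ccs}), you gesture at this via ``a genuine arc-to-segment transition at which the radial estimate \dots cannot be saturated'' without actually carrying it out; the uniform strictness is the entire content one needs. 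Second, and more structurally, the $\delta$-construction in the proof of Theorem \ref{scsccsnomin} is tailored to the {\sc scs} sub-path of a non-admissible $\mathscr{C}_1$; for a degenerate $\mathscr{C}_2$ with several components collapsed to zero there is no {\sc scs} sub-path to run it on, so ``the theorem's $\delta$-construction gives a strictly shorter competitor directly'' does not follow from the theorem as proved --- you would in fact need to redo the geometric construction in the {\sc ccs} setting, which is precisely what the paper's (also terse) proof asks the reader to do. In short, the detour through degenerate $\mathscr{C}_2$ components neither avoids the hard case nor fills the gap the paper leaves; the cleaner path, and the one the paper intends, is to transplant the $C_1, C_2, C_3$ construction directly to a {\sc ccs} (or {\sc scc}) path.
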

\begin{proof} By the construction in Theorem \ref{scsccsnomin} we have that {\sc scs} paths are not length minimisers. The proof that {\sc ccs} (or {\sc scc}) paths are also not length minimisers follows from an identical construction as the one used for the non-admissible case in Theorem \ref{scsccsnomin}. We leave the details to the reader.
\end{proof}

\begin{proposition} \label{cccmnopt} A {\sc ccc} path with middle arc of length less than or equal to $\pi$ is not a length minimiser.
\end{proposition}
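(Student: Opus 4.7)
Without loss of generality, take $\gamma$ to be of type \textsc{rlr}; the \textsc{lrl} case follows by reflection. Let $C_1,C_2,C_3$ denote the three unit circles carrying the arcs of $\gamma$, with centres $c_1,c_2,c_3$, and let $A=C_1\cap C_2$, $B=C_2\cap C_3$ be the tangency points. Because an \textsc{r}-circle and an \textsc{l}-circle are externally tangent, $|c_1c_2|=|c_2c_3|=2$, so $c_1c_2c_3$ is an isosceles triangle with apex angle $\theta\le\pi$ at $c_2$, where $\theta$ is also the angular length of the middle arc of $\gamma$; consequently $|c_1c_3|=4\sin(\theta/2)$.

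The replacement I propose is the \textsc{rsr} path $\delta\in\Gamma(\mbox{\sc x,y})$ that shares the outer circles $C_1,C_3$ with $\gamma$: from the initial point $S_0$ of $\gamma$ follow $C_1$ clockwise to the point $P$ at which the common external tangent to $C_1,C_3$ lying on the side of $\overline{c_1c_3}$ opposite $c_2$ touches $C_1$, then traverse the tangent segment $\overline{PQ}$, and finally follow $C_3$ clockwise to the terminal point $S_1$. Since $\vec{c_1P}$ is perpendicular to $\overline{c_1c_3}$ while $\vec{c_1A}$ points toward $c_2$, a short angle chase in the isosceles triangle $c_1c_2c_3$ yields $\angle A c_1 P=\theta/2$, and symmetrically $\angle B c_3 Q=\theta/2$. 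Under the assumption that the two outer arcs of $\gamma$ each have length at least $\theta/2$, the two outer arcs of $\delta$ are each shorter than the corresponding arcs of $\gamma$ by exactly $\theta/2$, and hence
\[
\mathcal{L}(\gamma)-\mathcal{L}(\delta) \;=\; 2\cdot\tfrac{\theta}{2}+\theta-4\sin(\theta/2) \;=\; 2\theta-4\sin(\theta/2) \;>\; 0,
\]
where strict positivity follows from $\sin x < x$ for every $x>0$. This exhibits the required strictly shorter element of $\Gamma(\mbox{\sc x,y})$.

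The principal obstacle is the proviso that the two outer arcs of $\gamma$ each exceed $\theta/2$. When this fails the \textsc{rsr} construction above is not itself a bounded curvature path, and I would switch to the other common external tangent to $C_1,C_3$ (the one on the same side of $\overline{c_1c_3}$ as $c_2$); in the fully degenerate situation in which an outer arc of $\gamma$ vanishes, I would instead replace $\gamma$ by a path of strictly lower complexity, which is ruled out as a length minimiser by Corollary \ref{coroscs} and its preceding results. In every sub-case the underlying analytic input is the same elementary inequality $\sin(\theta/2)<\theta/2$ for $\theta\in(0,\pi]$, and this is what drives the length reduction throughout.
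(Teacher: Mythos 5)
Your replacement is exactly the one the paper uses: swap the {\sc ccc} path for the {\sc csc} path supported on the same two outer circles. Where the paper invokes the projection argument of Lemma~\ref{fundlemma}, you instead compute the length difference explicitly, and the identity $\mathcal{L}(\gamma)-\mathcal{L}(\delta)=2\theta-4\sin(\theta/2)>0$ is correct (the isosceles triangle with sides $2,2$ and apex angle $\theta$ has base $4\sin(\theta/2)$, and each tangency-to-tangent-point arc drops by $\theta/2$). This is a genuinely cleaner way to see the inequality than the projection, so the core computation is a nice contribution.

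Two points need fixing. First, a geometric slip: the tangent point $P$ of the {\sc rsr} replacement lies on the \emph{same} side of $\overline{c_1c_3}$ as $c_2$, not the opposite side. Placing $c_1=(0,0)$, $c_3=(d,0)$ with $d=4\sin(\theta/2)$ and $c_2=(d/2,2\cos(\theta/2))$, the tangency point $A$ sits at angle $\pi/2-\theta/2$ on $C_1$, while clockwise traversal toward $C_3$ forces $P=(0,1)$ at angle $\pi/2$; both $P$ and $c_2$ have positive ordinate. Your value $\angle Ac_1P=\theta/2$ is right, but it corresponds to this configuration, not the one you describe.

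The more serious problem is the proviso you yourself flag. When an outer arc of $\gamma$ has length less than $\theta/2$, the point $P$ is not reached by going clockwise before $A$: the {\sc rsr} path then carries a first arc of length $\ell_1-\theta/2+2\pi$, which is far \emph{longer}, and your length comparison breaks down entirely rather than merely requiring a sign change. Your two proposed repairs do not close this gap. Switching to the other common external tangent of $C_1,C_3$ does not produce an {\sc rsr} path at all: the clockwise tangent direction at the lower tangency point of $C_1$ points \emph{away} from $C_3$, so no admissible {\sc csc} path uses that tangent in this traversal order. And Corollary~\ref{coroscs} only rules out {\sc scs}, {\sc ccs}, and {\sc scc} paths, not the pure {\sc c} or {\sc cc} degenerations you would fall back to. A genuine argument is required for the short-outer-arc regime; the paper's route is to apply Lemma~\ref{fundlemma}'s projection argument (dividing $\gamma$ by the lines $L_1,L_2$ through $c_1P$ and $c_3Q$), which does not rely on subtracting $\theta/2$ from each outer arc. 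You would either need to adapt that projection argument, or supply a separate reduction for the case $\min(\ell_1,\ell_3)<\theta/2$, before your proof is complete.
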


\begin{proof}  Without loss of generality consider an {\sc lrl} path. An {\sc lrl} path with middle component of length less than or equal to $\pi$ has the center of the circle containing {\sc r} below the line joining $c_l(\mbox{\sc x})$ and $c_l(\mbox{\sc y})$. Consider the adjacent circles $\mbox{\sc C}_l(\mbox{\sc x})$ and $\mbox{\sc C}_l(\mbox{\sc y})$ and construct a replacement as in Proposition \ref {construct}. Apply Lemma \ref{fundlemma} to the {\sc rlr} path to conclude the statement. If we consider an {\sc rlr} path with middle component of length less than or equal to $\pi$ then consider $\mbox{\sc C}_r(\mbox{\sc x})$ and $\mbox{\sc C}_r(\mbox{\sc y})$ and proceed as before.
\end{proof}

\begin{proposition}\label{ccccnopt}  The $cs$ paths of complexity four are not length minimisers.
\end{proposition}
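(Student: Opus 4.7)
The plan is to enumerate all structural types of complexity-four $cs$ paths and dispatch each. Since two adjacent $\mbox{\sc s}$ components would merge or violate $C^1$, and two adjacent $\mbox{\sc c}$ components of the same turning direction lie on a single unit circle (and hence merge into one arc), the possible types are exactly eight: $\mbox{\sc cccc}$, together with the seven types containing at least one $\mbox{\sc s}$, namely $\mbox{\sc sccc}$, $\mbox{\sc cscc}$, $\mbox{\sc ccsc}$, $\mbox{\sc cccs}$, $\mbox{\sc sccs}$, $\mbox{\sc scsc}$, and $\mbox{\sc cscs}$.

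For each of the seven types containing an $\mbox{\sc s}$, I would isolate a contiguous length-three sub-path of type $\mbox{\sc scs}$, $\mbox{\sc ccs}$, or $\mbox{\sc scc}$ and invoke Corollary \ref{coroscs}. Explicitly: $\mbox{\sc sccc}$ and $\mbox{\sc cscc}$ each contain $\mbox{\sc scc}$; $\mbox{\sc ccsc}$ and $\mbox{\sc cccs}$ each contain $\mbox{\sc ccs}$; $\mbox{\sc scsc}$ and $\mbox{\sc cscs}$ each contain $\mbox{\sc scs}$; and $\mbox{\sc sccs}$ contains both $\mbox{\sc scc}$ (positions 1--3) and $\mbox{\sc ccs}$ (positions 2--4). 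Each such sub-path is a bounded curvature path between its intermediate endpoint conditions, and by Corollary \ref{coroscs} admits a strictly shorter replacement with those same conditions. Splicing this replacement back into the full path yields a strictly shorter $cs$ path in $\Gamma(\mbox{\sc x,y})$, so the original is not a length minimiser.

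For the remaining type $\mbox{\sc cccc}$, my first move is to apply Proposition \ref{cccmnopt} to the two $\mbox{\sc ccc}$ sub-paths determined by the first three and the last three arcs; their middle components are the second and third arcs of the $\mbox{\sc cccc}$, respectively. If the length of either of these interior arcs is at most $\pi$, the corresponding $\mbox{\sc ccc}$ sub-path shortens via Proposition \ref{cccmnopt}, and that shortening lifts to the full $\mbox{\sc cccc}$. The main obstacle is the residual case in which both interior arcs exceed $\pi$, which is geometrically possible and requires a direct construction. Here I would carry out the replacement indicated in Figure \ref{figrepscsnomin} (right), exploiting a common tangent between two of the four unit circles carrying the arcs to build a shorter $\mbox{\sc csc}$ or $\mbox{\sc ccc}$ connector between the endpoints of the $\mbox{\sc cccc}$, and comparing lengths via Lemmas \ref{rad} and \ref{seg} in the spirit of the fundamental lemma and of Theorem \ref{scsccsnomin}. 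Verifying strict length reduction in this two-long-arc configuration is the crux of the argument.
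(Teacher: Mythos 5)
Your treatment of the complexity-four paths that contain at least one line segment coincides with the paper's: both reduce to Corollary \ref{coroscs} by locating an $\mbox{\sc scs}$, $\mbox{\sc ccs}$, or $\mbox{\sc scc}$ sub-path, replacing it by a strictly shorter competitor with the same intermediate endpoint conditions, and splicing. Your enumeration is in fact slightly more careful than the paper's (you correctly observe that $\mbox{\sc sccs}$ contains $\mbox{\sc scc}$ and $\mbox{\sc ccs}$ but not $\mbox{\sc scs}$, whereas the paper's phrasing suggests every two-segment complexity-four path contains $\mbox{\sc scs}$).

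The genuine gap is in the $\mbox{\sc cccc}$ case, and you have flagged it yourself without closing it. Reducing to Proposition \ref{cccmnopt} disposes only of the configurations in which at least one of the two interior arcs has length at most $\pi$; in the residual case where both interior arcs exceed $\pi$ you gesture at a replacement built from a common tangent, ``in the spirit of the fundamental lemma,'' but explicitly defer the length comparison as ``the crux.'' That is exactly the part that has to be done, and it is not clear the machinery you propose applies: Proposition \ref{construct} and Lemma \ref{fundlemma} were engineered for fragments (length less than $1$, no negative direction, confined to ${\mathcal R}(\mbox{\sc z})$), and a $\mbox{\sc cccc}$ path with two arcs longer than $\pi$ is nowhere near satisfying those hypotheses, so the projection argument does not transfer directly. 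The paper sidesteps the case split entirely: for $\mbox{\sc cccc}$ it computes the first and second variation of length within the family of $\mbox{\sc cccc}$ paths (varying the two interior centres $c_2$, $c_3$ on circles of radius $2$ about the fixed $c_1$, $c_4$) and shows that every critical point is unstable, so no $\mbox{\sc cccc}$ path can be a local, let alone global, minimiser. That argument is uniform in the arc lengths and does not require constructing an explicit shorter competitor. To salvage your approach you would need to actually carry out and verify the length comparison for the two-long-arc configuration, which is a nontrivial new piece of work rather than a routine adaptation.
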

\begin{proof} If a complexity four path contains exactly one line segment then it must contain a {\sc ccs} or {\sc scc} component. By virtue of Corollary \ref{coroscs} we have that such a component is not a length minimiser. If a complexity four path contains exactly two line segments then it must contain an {\sc scs} component. Again by Corollary \ref{coroscs} we have that such a component is not a length minimiser. 

For a proof that {\sc cccc} paths are not length minimisers, we use a variational argument. In particular, we show that if a {\sc cccc} path is a critical point of the length function, then it is unstable. So that it is evident how the equations scale as the minimum turning radius $\kappa$ varies, we do not set $\kappa$ equal to unity. Let $\gamma$ be a {\sc cccc} path whose arcs have lengths $\theta_1(t), \theta_2(t), \theta_3(t)$ and $\theta_4(t)$ and centres $c_1(t), c_2(t), c_3(t)$ and $c_4(t)$, respectively. For succinctness, we suppress the dependency on $t$. Hence the length of $\gamma$ is given by
\begin{equation*}
\mathcal{L}(\gamma) = \theta_1 + \theta_2 + \theta_3 + \theta_4.
\end{equation*}

Consider a perturbation of $\gamma$ to a nearby path of type {\sc cccc}. The first variation of length is given by
\begin{equation*}
\dot{\mathcal{L}}(\gamma) = \langle(c_2 - c_1)',\hat{u}_1\rangle + \langle(c_3 - c_2)',\hat{u}_2\rangle + \langle(c_4 - c_3)',\hat{u}_3\rangle,
\end{equation*}
where the unit vectors $\hat{u}_i$ are tangent to the circular arcs of $\gamma$ with centres $c_i$ and $c_{i+1}$ and have the same orientation as $\gamma$ (see Figure \ref{figrepscsnomin} right). Note that the lengths $\|c_{i+1} - c_i\|$ remain constant throughout the perturbation. Hence, the vector $(c_{i+1} - c_i)'$ must be perpendicular to the vector $c_{i+1} - c_i$. In other words, the vector $(c_{i+1} - c_i)'$ must be parallel to the vector $\hat{u}_i$. So we obtain $\langle(c_{i+1} - c_i)',\hat{u}_i\rangle = \pm\|(c_{i+1} - c_i)'\|$, where the sign of the inner product depends on the direction of variation. In particular, the inner products $\langle(c_2 - c_1)',\hat{u}_1\rangle$ and $\langle(c_4 - c_3)',\hat{u}_3\rangle$ are of opposite sign. On the other hand, the inner product $\langle(c_3 - c_2)',\hat{u}_2\rangle$ may change sign relative to $\langle(c_2 - c_1)',\hat{u}_1\rangle$ and $\langle(c_4 - c_3)',\hat{u}_3\rangle$. Note that the centres $c_1$ and $c_4$ are fixed. Hence both $c_1' = 0$ and $c_4' = 0$. At the critical point, we set the derivative of the length function $\mathcal{L}(\gamma)$ equal to zero and obtain one of the following two equations:
\begin{equation*}
\dot{\mathcal{L}}(\gamma) = \|c_2'\| + \|c_3' - c_2'\| - \|c_3'\| = 0,
\end{equation*}
or
\begin{equation*}
\dot{\mathcal{L}}(\gamma) = \|c_2'\| - \|c_3' - c_2'\| - \|c_3'\| = 0.
\end{equation*}

In either case, the vectors $c_2'$ and $c_3'$ must be parallel by the triangle inequality. Since the vector $c_3 - c_2$ cannot undergo any change in length, the perturbation vectors $c_2'$ and $c_3'$ must in fact be equal. Note that the perturbation vectors $c_2'$ and $c_3'$ are tangent to the circles of radius $2\kappa$ with centres $c_1$ and $c_4$. It is geometrically obvious that a stationary configuration is obtained only in the symmetric situation where $\hat{u}_1 = \hat{u}_3$. In order to demonstrate that this configuration corresponds to an unstable critical point of the length function, we will need to study the second variation of length. By the Leibniz rule,
\begin{equation*}
\ddot{\mathcal{L}}(\gamma) = \langle(c_2 - c_1)'',\hat{u}_1\rangle + \langle(c_2 - c_1)',\hat{u}_1'\rangle + \langle(c_3 - c_2)'',\hat{u}_2\rangle + \langle(c_3 - c_2)',\hat{u}_2'\rangle + \langle(c_4 - c_3)'',\hat{u}_3\rangle + \langle(c_4 - c_3)',\hat{u}_3'\rangle.
\end{equation*}

By the same reasoning as above, the vector $(c_{i+1} - c_i)'$ must be perpendicular to the vector $\hat{u}_i'$, and  both $c_1'' = 0$ and $c_4'' = 0$. Moreover, at the critical point, $\hat{u}_1 = \hat{u}_3$. Therefore,
\begin{equation*}
\ddot{\mathcal{L}}(\gamma) = \langle(c_3 - c_2)'',\hat{u}_2\rangle - \langle(c_3 - c_2)'',\hat{u}_1\rangle.
\end{equation*}

The vector $(c_3 - c_2)''$ makes an obtuse angle with the vector $\hat{u}_2$ throughout the perturbation and is parallel to the vector $\hat{u}_2$ at the critical point. To see this, note that the component of $(c_3 - c_2)''$ parallel to $c_3 - c_2$ is equal to $-|\langle(c_3 - c_2)',\hat{u}_2\rangle|^2/2\kappa$, and akin to the centripetal acceleration of the vector $c_3 - c_2$. At the critical point, $c_2' = c_3'$. Hence, the vector $c_3 - c_2$ is not subject to any rotation and the component of $(c_3 - c_2)''$ perpendicular to $c_3 - c_2$ must be zero. Therefore, 
\begin{equation*}
\ddot{\mathcal{L}}(\gamma) = -\|(c_3 - c_2)''\| - \langle(c_3 - c_2)'',\hat{u}_1\rangle < 0,
\end{equation*}
and the configuration is unstable.
\end{proof}

\begin{corollary} \label{nomin}  $cs$ paths of complexity greater than $3$ are not length minimisers.
\end{corollary}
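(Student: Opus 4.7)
The plan is to reduce the statement for arbitrary complexity to the already-established case $n=4$. Given a $cs$ path $\gamma \in \Gamma(\mbox{\sc x,y})$ of complexity $n > 3$, I would select any four consecutive components of $\gamma$. Since $\gamma$ is $C^1$, the two junction points between the selected block and the rest of $\gamma$ carry well-defined unit tangent vectors. Thus these four components form a $cs$ sub-path $\gamma'$ of complexity exactly $4$ belonging to $\Gamma(\mbox{\sc x}',\mbox{\sc y}')$ for some $\mbox{\sc x}',\mbox{\sc y}' \in T{\mathbb R}^2$ read off from the junction data.

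By Proposition \ref{ccccnopt}, such a complexity-four path is not a length minimiser in $\Gamma(\mbox{\sc x}',\mbox{\sc y}')$, so there exists a bounded curvature path $\tilde\gamma' \in \Gamma(\mbox{\sc x}',\mbox{\sc y}')$ with $\mathcal{L}(\tilde\gamma') < \mathcal{L}(\gamma')$. Splicing $\tilde\gamma'$ into $\gamma$ in place of $\gamma'$ produces a new path $\tilde\gamma$ connecting {\sc x} to {\sc y}: the matching endpoint conditions at the two seams guarantee that $\tilde\gamma$ is $C^1$, and the curvature bound is inherited from both the unchanged portion of $\gamma$ and from the replacement $\tilde\gamma'$. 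Because
\begin{equation*}
\mathcal{L}(\tilde\gamma) = \mathcal{L}(\gamma) - \mathcal{L}(\gamma') + \mathcal{L}(\tilde\gamma') < \mathcal{L}(\gamma),
\end{equation*}
the original path $\gamma$ is not a length minimiser in $\Gamma(\mbox{\sc x,y})$.

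All the substantive work is done by Proposition \ref{ccccnopt}; the only verification required here is the routine observation that substituting a bounded curvature path with matching tangent data into an existing $cs$ path preserves both $C^1$ regularity at the seams and the global bound on curvature, so I expect no real obstacle beyond explicitly naming the spliced endpoints $\mbox{\sc x}',\mbox{\sc y}'$.
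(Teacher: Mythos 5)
Your proof is correct and makes explicit exactly the splicing argument that the paper leaves implicit when it declares the corollary ``Immediate from Proposition \ref{ccccnopt}.'' The only point worth stressing (which you do address) is that ``not a length minimiser'' for the excised block yields some shorter \emph{bounded curvature} path, not necessarily a $cs$ path, and since the corollary only asserts non-minimality of the original path, that is sufficient.
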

\begin{proof} Immediate from Proposition \ref{ccccnopt}.
\end{proof}

Now we proceed to establish the classification of length minimisers in spaces of planar bounded curvature first obtained in \cite{dubins 1}.

\begin{theorem} \label{embdub}Choose $\mbox{\sc x,y} \in T{\mathbb R}^2$. A length minimising bounded curvature path in $\Gamma(\mbox{\sc x,y})$ is either a {\sc ccc} path having its middle component of length greater than $\pi $ or a {\sc csc} path. Some of the circular arcs or line segments can have zero length.
\end{theorem}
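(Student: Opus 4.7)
The plan is to assemble the result from the machinery already developed, so the proof is essentially a bookkeeping exercise over the possible normalised forms. Given an arbitrary length minimiser $\gamma \in \Gamma(\mbox{\sc x,y})$, I would first invoke Proposition \ref{csform} to replace $\gamma$ by a $cs$ path $\gamma^*$ with $\mathcal{L}(\gamma^*) \leq \mathcal{L}(\gamma)$. Since $\gamma$ is a minimiser, equality must hold and $\gamma^*$ is itself a length minimiser; thus it suffices to classify length minimising $cs$ paths.

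Next I would apply Corollary \ref{nomin} to cut the complexity of $\gamma^*$ down to at most $3$. With complexity bounded, the only $cs$ path types that remain as candidates are (up to labelling of {\sc l} versus {\sc r} and up to components of zero length) the following: the trivial paths of complexity $\leq 2$, which can always be regarded as degenerate {\sc csc} paths with some components of zero length; and the complexity-$3$ types {\sc csc}, {\sc ccc}, {\sc scs}, {\sc ccs} and {\sc scc}. I would then eliminate {\sc scs}, {\sc ccs} and {\sc scc} by Corollary \ref{coroscs}, and eliminate every {\sc ccc} whose middle arc has length at most $\pi$ by Proposition \ref{cccmnopt}. What survives is precisely a {\sc csc} path or a {\sc ccc} path with middle arc of length strictly greater than $\pi$, which is the claim.

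To finish I would note that the trivial cases of complexity one or two are subsumed in the {\sc csc} family by allowing some of the three components to have zero length; this accounts for the final sentence of the statement.

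The one step where I would expect to need to be careful is in the opening reduction: Proposition \ref{csform} gives a replacement $\gamma^*$ with $\mathcal{L}(\gamma^*) \leq \mathcal{L}(\gamma)$ but the replacement may not lie in the same homotopy class as $\gamma$, so I need to argue that minimality of $\gamma$ in $\Gamma(\mbox{\sc x,y})$ forces $\gamma^*$ itself to be a minimiser in $\Gamma(\mbox{\sc x,y})$, which is immediate since $\gamma^*\in \Gamma(\mbox{\sc x,y})$ and $\mathcal{L}(\gamma^*)\leq \mathcal{L}(\gamma)$. Beyond this observation the proof is a direct concatenation of previously proved statements, with no further calculation required.
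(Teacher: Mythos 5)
Your proof follows the same overall structure as the paper's — normalise to a $cs$ path, bound complexity by Corollary~\ref{nomin}, and eliminate the bad complexity-$3$ types via Corollary~\ref{coroscs} and Proposition~\ref{cccmnopt} — so the content is essentially the same. There is, however, one small but genuine gap. The theorem is a universal statement: \emph{every} length minimiser in $\Gamma(\mbox{\sc x,y})$ must be a {\sc csc} or a long-middle-arc {\sc ccc}. Your opening reduction shows only that the replacement $\gamma^*$ is a minimiser and that $\gamma^*$ has the required form; it does not by itself establish anything about the original minimiser $\gamma$. Proposition~\ref{csform} only gives $\mathcal{L}(\gamma^*)\leq\mathcal{L}(\gamma)$, and upgrading ``it suffices to classify length-minimising $cs$ paths'' to a classification of \emph{all} minimisers requires knowing that $\gamma = \gamma^*$. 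This is where you need the rigidity half of Lemma~\ref{fundlemma}, namely that the replacement of a fragment has length strictly less than the fragment unless the two are identical. With that, $\mathcal{L}(\gamma^*) = \mathcal{L}(\gamma)$ forces every fragment to coincide with its replacement, so $\gamma$ is itself a $cs$ path; the paper phrases this contrapositively (if $\gamma$ is not a $cs$ path, the replacement is strictly shorter, so $\gamma$ is not a minimiser). The caveat you flag at the end about homotopy classes is not actually an issue, since $\Gamma(\mbox{\sc x,y})$ is not subdivided by homotopy class here; the real subtlety is the one above.
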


\begin{proof} Choose $\mbox{\sc x,y} \in T{\mathbb R}^2$ and consider a fragmentation for  $\gamma \in\Gamma(\mbox{\sc x,y})$. If $\gamma$ is not a $cs$ path then by applying Lemma \ref{fundlemma} to each fragment we obtain a $cs$ path in $\Gamma(\mbox{\sc x,y})$ shorter than $\gamma$. We conclude that $\gamma$ is not a length minimiser in $\Gamma(\mbox{\sc x,y})$. If $\gamma$ is a $cs$ path of complexity greater than or equal to $4$, by Corollary \ref{nomin} we conclude that $\gamma$ is not a length minimiser in $\Gamma(\mbox{\sc x,y})$. If the complexity of $\gamma$ is exactly $3$ then by Corollary \ref{coroscs} we have that {\sc scs} and {\sc ccs} (or {\sc scc}) paths are not length minimisers. By applying Proposition \ref{cccmnopt} we conclude the proof.
\end{proof}

The paths in the previous result are called {\it Dubins paths} in honour of Lester Dubins who proved Theorem \ref{embdub} for the first time in 1957 in \cite{dubins 1}.

\section{Remarks on generalisations}
By applying the Pontryagin maximum principle to a time optimal control system F. Monroy-P\'{e}rez in \cite{mon} characterised the length minimisers in 2-dimensional homogeneous spaces of constant curvature. D. Mittenhuber also using an argument with control theoretical flavour obtained Dubins result in the hyperbolic 2-space. 

Notice all the definitions in this work can be adapted for other surfaces. In particular, without much effort Lemma \ref{seg} and Lemma \ref{rad} can be adapted to paths in 2-dimensional homogeneous spaces of constant curvature.

\subsection{Remarks on the hyperbolic case.} 

Here Lemma \ref{fundlemma}  is a little bit more complicated to prove. But otherwise exactly the same approach works.
The paths of constant curvature in the hyperbolic plane are semicircles orthogonal to the real axis including the orthogonal upper half lines. Notice in this case we cannot rescale to only deal with $\kappa = 1$ since rescaling changes the
underlying curvature. So we need to always work with arcs of circles of appropriate fixed radius depending on the choice of $\kappa$.

\subsection{Remarks on the elliptic case.}

For the positive curvature case we propose to work on a 2-sphere. Clearly there is a scaling issue to be addressed. This is due to the ratio between the curvature bound and the radius of the sphere.  An identical method as the one employed for the euclidean or hyperbolic case (see Lemma \ref{fundlemma}) does not work here. Due to the geometry of the sphere, the length of a fragment in between $L_1$ and $L_2$ as in Figure \ref{figfunlem} (but for the spherical case) could eventually be shorter than the projected path in between $L_1$ and $L_2$. This situation can be easily overcome by considering a sequence of replacements.

\bibliographystyle{amsplain}
   
\end{document}